\RequirePackage[l2tabu, orthodox]{nag}
\documentclass[11pt,a4paper]{amsart}																										% for printing
\usepackage[includeheadfoot,paperwidth=210mm,paperheight=297mm,bottom=1.5cm,top=1.5cm,right=2cm,left=3cm,twoside]{geometry} 				% for printing
\usepackage{hyperref}
\hypersetup{	
	colorlinks,
	linkcolor=black,
	citecolor=black,
	urlcolor=black,
	breaklinks=true
}
\usepackage[applemac]{inputenc}
\usepackage[T1]{fontenc}
\usepackage[english]{babel}
\usepackage[textwidth=3cm]{todonotes}
\presetkeys{todonotes}{color=orange!85}{}

\usepackage{
	microtype,
	amssymb,
	mathtools,
}
\newtheoremstyle{lemma}{.5\baselineskip\@plus.2\baselineskip\@minus.2\baselineskip}{.5\baselineskip\@plus.2\baselineskip\@minus.2\baselineskip}
	{\itshape}
	{}
	{\bfseries}
	{.}
	{\newline}
	{\thmname{#1}\thmnumber{ #2}\thmnote{ (#3)}}	
\theoremstyle{lemma}
	\newtheorem{theorem}{Theorem}
	\newtheorem{lemma}[theorem]{Lemma}  
	\newtheorem{proposition}[theorem]{Proposition}
	\newtheorem{corollary}[theorem]{Corollary}  

\newtheoremstyle{definition}{.5\baselineskip\@plus.2\baselineskip\@minus.2\baselineskip}{.5\baselineskip\@plus.2\baselineskip\@minus.2\baselineskip}
	{}
	{}
	{\bfseries}
	{.}
	{\newline}
	{\thmname{#1}\thmnumber{ #2}\thmnote{ (#3)}}	
\theoremstyle{definition}
	\newtheorem{definition}[theorem]{Definition}

\def\XXint#1#2#3{{\setbox0=\hbox{$#1{#2#3}{\int}$} 
\vcenter{\hbox{$#2#3$}}\kern-.5\wd0}}

\makeatletter
\newcommand\avsuminner[2]{%
  {\sbox0{$\m@th#1\sum$}%
   \vphantom{\usebox0}%
   \ooalign{%
     \hidewidth
     \smash{\vrule height\dimexpr\ht0+1pt\relax depth\dimexpr\dp0+1pt\relax}%
     \hidewidth\cr
     $\m@th#1\sum$\cr
   }%
  }%
}
\makeatother

\DeclareMathOperator{\dist}{dist}

\DeclareMathOperator{\minRad}{minRad}
\DeclareMathOperator{\maxCurv}{maxCurv}
\DeclareMathOperator{\dcsd}{dcsd}

\DeclareMathOperator{\dcrit}{dcrit}

%\operp symbol
%%%%%%%%%%%%%%%%%%%%%%%%
\newcommand{\operp}{\raisebox{0.25ex}{\ensuremath{\,{\scriptstyle\mathop{\bigcirc\kern-0.71em\perp}}\,}}}

%Special stuff
%%%%%%%%%%%%%%%%%%%%%%%

\newcommand{\sphere}{\ensuremath{\mathbb{S}}}
\newcommand{\HM}{\ensuremath{\mathcal{H}}}

\newcommand{\M}{\ensuremath{\mathcal{M}}}

\DeclarePairedDelimiter\abs{\lvert}{\rvert}
\DeclarePairedDelimiter\norm{\lVert}{\rVert}

\newcommand{\dd}{\ensuremath{\mathrm{d}}}

\renewcommand{\phi}{\varphi}
\renewcommand{\epsilon}{\varepsilon}

%Letters
%%%%%%%%%%%%%%%%%%%%%%%%
\newcommand{\R}{\mathbb{R}}
\newcommand{\N}{\mathbb{N}}

\renewcommand{\phi}{\varphi}
\renewcommand{\epsilon}{\varepsilon}

\hypersetup{
	pdftitle={Discrete knot energies},
	pdfsubject={},
	pdfauthor={Sebastian Scholtes},
	pdfkeywords={},
	pdfcreator={},
	pdfproducer={}
}

\makeindex
\begin{document}

\title{Discrete knot energies}

\author{\href{mailto:sebastian.scholtes@rwth-aachen.de}{Sebastian Scholtes}}
\address{Lehrstuhl I f\"ur Mathematik\\RWTH Aachen University\\52062 Aachen\\Germany}
%\curraddr{current adress}
\email{\href{mailto:sebastian.scholtes@rwth-aachen.de}{sebastian.scholtes@rwth-aachen.de}}
\urladdr{\href{http://www.math1.rwth-aachen.de/~scholtes/}{http://www.math1.rwth-aachen.de/~scholtes/}}

%\contrib{}
%\dedicatory{}
\date{\today}
%\thanks{}
%\translator{}
\keywords{M\"obius energy, thickness, ropelength, ideal knot, Menger curvature, polygonal knot, knot energy, $\Gamma$-convergence, discrete energy}
\subjclass[2010]{49J45; 57M25, 49Q10, 53A04} 
\begin{abstract}
	The present chapter gives an overview on results for discrete knot energies. 
	These discrete energies are designed to make swift numerical computations and thus open the field to computational methods. Additionally,
	they provide an independent, geometrically pleasing and consistent discrete model that behaves similarly to the original model. 
	We will focus on M\"obius energy, integral Menger curvature and thickness. 
\end{abstract}
\maketitle

\section{Introduction}

In classic knot theory, mathematicians are often interested in knot classes and how to distinguish between them, for example via \emph{knot invariants}. 
Contrary to this approach, geometric knot theory deals with the specific shape of knots and how to find or compute particularly nice representatives of a given knot class.
The exact meaning of nice depends on the context and can vary from applied (see the section on thickness) to theoretical considerations (see the section on integral Menger curvature).
To capture the ``quality '' of a knot, Fukuhara introduced the concept of a \emph{knot energy} (see \cite{Fukuhara1988a}).
An optimal representative is then said to be a minimizer of this energy among all curves of a given knot class.
Later on, this approach was further developed by other authors (see \cite{Simon1996a,Buck1997a,Sullivan2002a}) and by now, 
a functional on the space of knotted curves that is bounded from below and gets infinite as curves approach a self intersection is called a knot energy (see \cite{OHara2003a}).
This definition already includes some helpful ingredients to show that, indeed, minimizers of the energy exist.
The \emph{M\"obius energy}
\begin{align*}
	\mathcal{E}(\gamma)\vcentcolon=\int_{\sphere_{L}}\int_{\sphere_{L}}\left(\frac{1}{\abs{\gamma(t)-\gamma(s)}^{2}}-\frac{1}{d_{\sphere_{L}}(t,s)^{2}}\right)\,\dd s\,\dd t,
\end{align*}
for arc length curves $\gamma$, is a particular example of such an energy (see \cite{OHara1991a}).
In this chapter, we additionally consider \emph{integral Menger curvature} and \emph{thickness}. Each of these three energies is connected to curvature.
For an overview of regularizing and knot theoretic properties of different curvature energies see \cite{Strzelecki2013b,Strzelecki2013c,Strzelecki2014a,Blatt2014b,Blatt2014c}.
\bigskip

One way to think about knot invariants vs. knot energies is that
the first one gives rough information about the shape of the knot, for example, by restricting the knot to belong to a certain knot
class, while the energy resolves the finer details. For instance, an energy bound can in turn give bounds on the curvature, bi-Lipschitz constant, average crossing number or stick number.
\bigskip

The explicit shape of energy minimizing knots is only known in case of the unknot. For most energies, the energy optimal unknot is proven or expected to be the round circle (see \cite{Abrams2003a}).
Therefore, it is important to know how to make the energies accessible to computers and approximate such minimizers.
One approach is to investigate so-called \emph{discrete knot energies}, that is energies which are defined on polygonal knots. Here, the goal is to minimize these discrete energies in the class of polygonal knots
of a fixed knot class and fixed number of vertices and then prove that these minimizers converge to the minimizer of a ``smooth'' knot energy within the same knot class.
When trying to find such a discrete energy, the most obvious approach would be to restrict the original energy to the space of polygonal knots. 
However, such an approach does not work, as polygons have infinite energy.
This is related to the fact that all three energies are regularising, i.e. curves of finite energy are more regular than the curves for which the energy is well-defined. 
By borrowing appropriate concepts of curvature from discrete geometry and replacing smooth notions by discrete ones (for example integrals by sums), it is possible to define discrete energies 
in the same spirit as the original energies.
Hence, these energies are not merely discretisations but discrete versions of the original energy. 
A suitable convergence of energies, which to some extent already includes the convergence of minimizers, is \emph{$\Gamma$-convergence}.
For other modes of convergence see the Chapter \emph{Variational Convergence}.
\bigskip

For each of the three energies, there is a section in which we first consider the history of the energies and comment on recent developments.
Then we introduce the appropriate discrete energies and explain the connections to the original energies.
For the sake of clarity, we present most results for curves of length $1$. 
In the appendix, we give a short introduction to $\Gamma$-convergence.

\subsection{Notation}

In the following sections, $\mathcal{C}$ is the space of closed arc length curves with length $1$ and $\mathcal{P}_{n}$ the subspace of equilateral polygons with $n$ segments. 
Furthermore, we abbreviate $\mathcal{C}_{k,\mathrm{p}}\vcentcolon=\mathcal{C}\cap (C^{k}\cup \bigcup_{n\in\N}\mathcal{P}_{n})$, where $C^{k}$ is the class of $k$ times continuously differentiable functions.
We write $C^{k,\alpha}$ for the class of functions in $C^{k}$ whose derivatives are H\"older continuous with exponent $\alpha$ and 
$L^{q}$ for the Lebesgue spaces of $q$-integrable functions.
By $W^{k,q}$ we denote the standard Sobolev spaces of $k$-times weakly differentiable closed curves with $q$-integrable weak derivative.
Adding a knot class $\mathcal{K}$ in brackets to a set of curves restricts this set to the subset of curves that belong to the knot class $\mathcal{K}$.
The circle of length $L>0$ is denoted by $\sphere_{L}$.

\section{M\"obius Energy}

The \emph{M\"obius energy}
\begin{align*}
	\mathcal{E}(\gamma)\vcentcolon=\int_{\sphere_{L}}\int_{\sphere_{L}}\left(\frac{1}{\abs{\gamma(t)-\gamma(s)}^{2}}-\frac{1}{d_{\sphere_{L}}(t,s)^{2}}\right)\,\dd s\,\dd t,
\end{align*}
is defined on closed rectifiable arc length curves $\gamma$ of length $L$. Here, $d_{\sphere_{L}}$ is the intrinsic metric on $\sphere_{L}$. 
This energy was introduced by O'Hara (see \cite{OHara1991a}) and has the interesting property that it is invariant under M\"obius transformations, hence its name. 
O'Hara could show that finite energy prevents the curve from having self intersections. 
Later on, the existence of energy minimizers in
prime knot classes was proven by Freedman, He and Wang (see \cite{Freedman1994a}), while there is a folklore conjecture, 
usually attributed to Kusner and Sullivan, questioning the existence in composite knot classes based on computer experiments (see \cite{Kusner1997a}).
Additionally, it was shown that the unique absolute minimizer is the round circle (see \cite{Freedman1994a}).
Similar uniqueness results are known for broader classes of energies (see \cite{Abrams2003a}).
The regularity of minimizers and, more generally, of critical points was investigated and smoothness could be proven (see \cite{Freedman1994a,He2000a,Reiter2009a,Reiter2010a,Blatt2015b}).
Furthermore, it was shown that the M\"obius energy of a curve is finite if and only if the curve is simple and the arc length parametrisation belongs to the fractional Sobolev space 
$\cramped{W^{3/2,2}}$ (see \cite{Blatt2012a}). The gradient flow of the M\"obius energy was investigated (see \cite{Blatt2012e,Blatt2016a}) and results for the larger class of O'Hara's knot energies (see \cite{OHara1992a,OHara1992b,OHara1994a})
are available (see \cite{Blatt2008b,Reiter2012a,Blatt2013b,Blatt2016b}).
\bigskip

A discrete version of the M\"obius energy, called \emph{minimum distance energy},
was introduced by Simon (see \cite{Simon1994b}). If $p$ is a polygon with $n$ consecutive segments
$X_{i}$ this energy is defined by
\begin{align}\label{MDenergy}
	\begin{split}
		\mathcal{E}_{\mathrm{md},n}(p)\vcentcolon=\mathcal{U}_{\text{md},n}(p)-\mathcal{U}_{\text{md},n}(g_{n})\quad\text{with}\quad
		\mathcal{U}_{\mathrm{md},n}(p)\vcentcolon=\sum_{i=1}^{n}\sum_{\substack{j=1\\X_{i},X_{j}\text{ not adjacent}}}^{n}\frac{\abs{X_{i}}\abs{X_{j}}}{\dist(X_{i},X_{j})^{2}},
	\end{split}
\end{align}
where $g_{n}$ is the regular $n$-gon. Note, that this energy is scale invariant.
Similar energy functionals for polygons were considered previously (see \cite{Fukuhara1988a,Buck1993a}).
There is also a variant of the M\"obius energy for graphs (see \cite{Karpenkov2006a}) and
for curves with self intersections (see \cite{Dunning2011a}).
It is know that the minimum distance energy is Lipschitz continuous on sublevel sets:

\begin{theorem}[Minimum distance energy is Lipschitz continuous, \cite{Simon1994a}]% Theorem 10
	The minimum distance energy $\mathcal{E}_{\mathrm{md},n}$ is continuous on the space of nonsingular polygons with $n$ segments. 
	Moreover, it is Lipschitz continuous on the subspace of polygons of length at least $\ell$, whose energy is bounded by a constant $E$.
\end{theorem}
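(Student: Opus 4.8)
The plan is to use that, up to the additive constant $\mathcal{U}_{\mathrm{md},n}(g_{n})$, the energy $\mathcal{E}_{\mathrm{md},n}$ is a sum of at most $n^{2}$ terms $T_{ij}(p):=\abs{X_{i}}\abs{X_{j}}/\dist(X_{i},X_{j})^{2}$, and that each ingredient depends in a controlled way on the vertices. Identify an $n$-gon $p$ with its vertex tuple in $(\R^{d})^{n}$ and equip this space with $\norm{p-q}:=\max_{k}\abs{v_{k}-w_{k}}$. Two elementary facts will be used repeatedly: (i) $p\mapsto\abs{X_{i}}$ is $2$-Lipschitz, by the triangle inequality; (ii) $p\mapsto\dist(X_{i},X_{j})$ is $2$-Lipschitz, since displacing each endpoint of a segment by at most $\varepsilon$ displaces the segment by at most $\varepsilon$ in Hausdorff distance $d_{H}$, and $\abs{\dist(A,B)-\dist(A',B')}\le d_{H}(A,A')+d_{H}(B,B')$.

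With these, continuity on the set of nonsingular $n$-gons is immediate: at a nonsingular polygon $\dist(X_{i},X_{j})>0$ for every non-adjacent pair, so each $T_{ij}$ is a quotient of continuous functions with non-vanishing denominator, hence continuous there, and a finite sum of continuous functions is continuous.

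For the Lipschitz statement, put $E':=E+\mathcal{U}_{\mathrm{md},n}(g_{n})$ and $\mathcal{A}:=\{p\text{ nonsingular}:\mathcal{L}(p)\ge\ell,\ \mathcal{E}_{\mathrm{md},n}(p)\le E\}$, so $T_{ij}(p)\le\mathcal{U}_{\mathrm{md},n}(p)\le E'$ on $\mathcal{A}$. Since $\mathcal{U}_{\mathrm{md},n}$ is invariant under translations and dilations, I would first normalize so that the vertex barycenter is at the origin and $\mathcal{L}(p)=\ell$; a straightforward computation using $\mathcal{L}\ge\ell$ and $\norm{p}\le\mathcal{L}(p)$ (valid once the barycenter is centred) shows this normalization is Lipschitz on $\mathcal{A}$ with constant depending only on $n$, so it suffices to prove the Lipschitz bound on the normalized class $\mathcal{A}_{0}$. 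The crucial point is a pair of uniform geometric bounds on $\mathcal{A}_{0}$: there is $R=R(n,E)\ge1$ with $\ell/(nR)\le\abs{X_{i}}\le R\ell/n$ for all $i$, whence $\dist(X_{i},X_{j})\ge\ell/(nR\sqrt{E'})$ for all non-adjacent pairs (from $T_{ij}\le E'$) while trivially $\dist(X_{i},X_{j})\le\diam p\le\ell/2$. Granting this, each $T_{ij}$ equals $f(\abs{X_{i}},\abs{X_{j}},\dist(X_{i},X_{j}))$ with $f(a,b,r)=ab/r^{2}$ evaluated on the fixed box $[\ell/(nR),R\ell/n]^{2}\times[\ell/(nR\sqrt{E'}),\ell/2]$, on which $f$ is $C^{1}$ with gradient bounded in terms of $n,\ell,E$; composing with the $2$-Lipschitz maps (i)--(ii) and summing the $\le n^{2}$ terms shows $\mathcal{E}_{\mathrm{md},n}$ is Lipschitz on $\mathcal{A}_{0}$, and transporting this back through the Lipschitz normalization finishes the proof.

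The main obstacle is the lower bound $\abs{X_{i}}\ge\ell/(nR)$, i.e.\ excluding a degenerating edge under the energy constraint. (If $n=3$ there are no non-adjacent pairs, $\mathcal{E}_{\mathrm{md},3}\equiv0$, and the statement is trivial, so assume $n\ge4$.) For each $i$ the edges $X_{i-1}$ and $X_{i+1}$ are non-adjacent and are joined by the edge $X_{i}$, so $\dist(X_{i-1},X_{i+1})\le\abs{X_{i}}$; combined with $T_{i-1,i+1}\le E'$ this gives $\abs{X_{i-1}}\abs{X_{i+1}}\le E'\abs{X_{i}}^{2}$. Writing $a_{i}:=\log\abs{X_{i}}$, $b_{i}:=a_{i+1}-a_{i}$ and $\lambda:=\max\{\log E',0\}$, this reads $b_{i}\le b_{i-1}+\lambda$; running around the cycle, where $\sum_{i}b_{i}=0$, forces $\max_{i}b_{i}-\min_{i}b_{i}\le(n-1)\lambda$ and hence $\abs{a_{i}-a_{j}}\le(n-1)^{2}\lambda=:\log R$ for all $i,j$. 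Since $\max_{i}\abs{X_{i}}\ge\mathcal{L}(p)/n\ge\min_{i}\abs{X_{i}}$ and $\mathcal{L}(p)=\ell$ on $\mathcal{A}_{0}$, this yields the announced two-sided bound. Everything else is bookkeeping built on (i) and (ii).
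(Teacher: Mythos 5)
This chapter states the theorem without proof, citing Simon's original article, so there is no in-paper argument to compare against; judged on its own terms, your proof is correct and follows what is essentially the standard route (and, in substance, Simon's): extract from the energy bound a two-sided bound on edge lengths and a lower bound on the separation of non-adjacent edges, then observe that $(a,b,r)\mapsto ab/r^{2}$ is Lipschitz on the resulting convex box. The two points that needed care both check out. First, the reduction to the normalized class is legitimate: centring the barycenter before rescaling is exactly what makes $p\mapsto \ell(p-\bar p)/\mathcal{L}(p)$ Lipschitz on $\{\mathcal{L}\ge\ell\}$ (with constant of order $n$, using $\abs{1/\mathcal{L}(p)-1/\mathcal{L}(q)}\le 2n\norm{p-q}/(\ell\,\mathcal{L}(q))$ and $\norm{q-\bar q}\le\mathcal{L}(q)$), and scale/translation invariance transports the Lipschitz bound back. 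Second, the edge-length comparability is the real content, and your derivation is sound: for $n\ge4$ the pair $(X_{i-1},X_{i+1})$ is non-adjacent, $\dist(X_{i-1},X_{i+1})\le\abs{X_{i}}$ since these edges contain the endpoints of $X_{i}$, so $\abs{X_{i-1}}\abs{X_{i+1}}\le E'\abs{X_{i}}^{2}$, and the cyclic recursion $b_{i}\le b_{i-1}+\lambda$ together with $\sum_{i}b_{i}=0$ forces $\abs{b_{i}}\le(n-1)\lambda$ and hence the ratio bound $R=\max\{E',1\}^{(n-1)^{2}}$; note also that a degenerate edge is automatically excluded here, since $\abs{X_{i}}=0$ would force $\dist(X_{i-1},X_{i+1})=0$ and infinite energy. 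The only cosmetic remark is that the theorem does not specify the metric on polygon space; your vertexwise sup metric is comparable (with $n$-dependent constants) to the $\ell^{1}$-type vertex metric used elsewhere in the chapter, so nothing is lost.
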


Furthermore, the existence of minimizers for every tame knot class was established:

\begin{theorem}[Existence of discrete minimizers for $\mathcal{E}_{\mathrm{md},n}$, \cite{Simon1994a}]
	For every tame knot class $\mathcal{K}$ there is a minimizer of $\mathcal{E}_{\mathrm{md},n}$ in the set of all polygons with $n$ vertices that belong to $\mathcal{K}$.
\end{theorem}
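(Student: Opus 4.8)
The plan is to combine a compactness argument with the lower-semicontinuity and blow-up properties of the minimum distance energy. Fix a tame knot class $\mathcal{K}$ and let $(p^{(k)})_{k\in\N}$ be a minimizing sequence for $\mathcal{E}_{\mathrm{md},n}$ in the set of polygons with $n$ vertices belonging to $\mathcal{K}$. Since $\mathcal{E}_{\mathrm{md},n}$ is scale invariant, I would first normalise, say by requiring $\diam(p^{(k)})=1$, or equivalently fixing the total length; this uses only finitely many vertices, so the $p^{(k)}$ are described by a vector in $(\R^3)^n$ lying in a bounded set. By Bolzano--Weierstrass, after passing to a subsequence, the vertices converge, $p^{(k)}\to p$ in $(\R^3)^n$, which is the same as uniform convergence of the polygons. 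It remains to show (a) that the limit polygon $p$ is nonsingular (i.e.\ has no degenerate segments and is embedded), and (b) that $p$ still belongs to $\mathcal{K}$.

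For step (b), once we know $p$ is embedded, the fact that close polygons are ambient isotopic — a standard fact for polygonal knots, following e.g.\ from a tubular-neighbourhood argument — gives that $p$ has the same knot type as $p^{(k)}$ for large $k$, hence $p\in\mathcal{K}$. The crux is therefore step (a): ruling out degeneration of the limit. Here I would use that $\mathcal{E}_{\mathrm{md},n}$ is bounded along the minimizing sequence, say by $E<\infty$. If two nonadjacent segments $X_i^{(k)}, X_j^{(k)}$ approached each other (distance $\to 0$) while both keeping a definite length, the corresponding summand $\abs{X_i^{(k)}}\abs{X_j^{(k)}}/\dist(X_i^{(k)},X_j^{(k)})^2$ would blow up, contradicting the energy bound; so in the limit nonadjacent segments stay a positive distance apart, and in particular $p$ is embedded away from the vertices shared by adjacent segments. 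The remaining danger is that some segment length $\abs{X_i^{(k)}}\to 0$. If the polygons are taken \emph{equilateral} this cannot happen after the diameter normalisation, since all segments have length $1/n$; for general nonsingular polygons one argues that a collapsing segment forces its two neighbours to become nearly anti-parallel and, together with the embeddedness just established and the requirement that $p^{(k)}\in\mathcal{K}$ with $n$ vertices (so no ``spurious'' simplifications), derives that the knot type would have to change along the sequence, again a contradiction; alternatively one restricts attention, without loss of generality for the minimisation, to polygons whose shortest edge is bounded below, exploiting that an energy-minimising configuration in a fixed knot class with a fixed number of vertices cannot have a vanishingly short edge.

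The final step is lower semicontinuity: I would show $\mathcal{E}_{\mathrm{md},n}(p)\le\liminf_{k\to\infty}\mathcal{E}_{\mathrm{md},n}(p^{(k)})$. In fact, since $\mathcal{E}_{\mathrm{md},n}$ is a finite sum of terms each of which is a continuous function of the vertices on the open set where the relevant nonadjacent segments are disjoint and nondegenerate — and we have just shown the limit $p$ lies in that open set — one gets genuine continuity along the (sub)sequence, so $\mathcal{E}_{\mathrm{md},n}(p)=\lim_{k\to\infty}\mathcal{E}_{\mathrm{md},n}(p^{(k)})=\inf$, and $p$ is the desired minimizer. This last point can also be read off directly from the continuity statement in the preceding theorem. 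I expect the main obstacle to be step (a), specifically excluding the collapse of an edge length; this is where one must genuinely use that the admissible class fixes both the knot type and the vertex count, rather than any soft functional-analytic property of the energy.
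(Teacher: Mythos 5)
The survey states this theorem without proof (it is quoted from Simon), so I can only measure your argument against the known one, which is indeed the direct method you outline: normalise using scale invariance, extract a convergent subsequence of vertex vectors, show the limit is a nondegenerate embedded polygon of the same knot type, and conclude by continuity. Your skeleton is right, and you correctly locate the crux in excluding the collapse of an edge; but both of your proposed resolutions of that crux fail. The topological one rests on a false claim: if $\abs{X_i^{(k)}}\to 0$ the two neighbouring edges need not become anti-parallel, and even a genuinely degenerating corner does not force a change of knot type --- the limit is simply a polygon with fewer edges, possibly still embedded and still in $\mathcal{K}$, so no contradiction arises from the topology. The ``restrict without loss of generality to polygons whose shortest edge is bounded below'' alternative is circular: a uniform lower bound along the minimizing sequence is exactly what has to be proved.

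The correct mechanism is that the energy itself forbids edge collapse, indirectly, through terms involving third edges; no knot-theoretic input is needed for this step. Normalise the length to $1$ and suppose a maximal block $X_i,\dots,X_j$ of consecutive edges has total length tending to $0$ while the flanking edges $X_{i-1}$, $X_{j+1}$ keep length at least $\delta$. Apart from one exceptional configuration these flanking edges are nonadjacent, and $\dist(X_{i-1},X_{j+1})\leq \sum_{l=i}^{j}\abs{X_l}\to 0$, so already the single summand $\abs{X_{i-1}}\abs{X_{j+1}}/\dist(X_{i-1},X_{j+1})^2\geq \delta^2\bigl(\sum_{l}\abs{X_l}\bigr)^{-2}$ blows up, contradicting the energy bound along the minimizing sequence. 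The exceptional case, in which only two long edges survive and are adjacent, is handled by playing the terms between each flanking edge and the successive short edges of the block against one another: one deduces $\abs{X_i}\lesssim\abs{X_j}^2$ and $\abs{X_j}\lesssim\abs{X_i}^2$ simultaneously, which is impossible for small lengths. The same ``third edge'' device is needed to close a second gap you flag but do not resolve, namely embeddedness of the limit near a shared vertex: overlap of two adjacent edges is invisible to the energy directly (adjacent pairs are excluded from the sum), but such an overlap forces some nonadjacent pair, e.g.\ $X_{i-1}$ and $X_{i+1}$ or $X_i$ and $X_{i+2}$, to touch, and is therefore also excluded. With these two points repaired, your compactness, isotopy and continuity steps go through. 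Note finally that the theorem concerns all polygons with $n$ vertices, not equilateral ones, so the equilateral shortcut you mention is not available here.
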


For this energy, approximation results for suitably inscribed polygons could be shown:

\begin{theorem}[Explicit energy bound and convergence for inscribed polygons, \cite{Rawdon2006a}]
	Let $\gamma\in \mathcal{C}\cap C^{2}$ and let $p_{n}$ be inscribed polygons that divide $\gamma$ in $n$ arcs of length $\frac{1}{n}$. Then
	\begin{align*}
		\abs*{\mathcal{E}(\gamma)-\mathcal{E}_{\mathrm{md},n}(p_{n})}\leq \frac{C(\gamma)}{n^{\frac{1}{4}}}.
	\end{align*}
	If $n$ is large enough, the constant $C(\gamma)$ can be chosen as
	\begin{align*}
		C(\gamma)=\frac{290}{\Delta[\gamma]^{\frac{1}{4}}},
	\end{align*}
	where $\Delta[\gamma]$ is the thickness of $\gamma$ (see Section \ref{sectionthickness}). Hence,
	\begin{align*}
		\mathcal{E}_{\mathrm{md},n}(p_{n})\to\mathcal{E}(\gamma).
	\end{align*} 
\end{theorem}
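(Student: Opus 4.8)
The plan is to compare the double integral $\mathcal{E}(\gamma)$ and the double sum $\mathcal{E}_{\mathrm{md},n}(p_n)$ term by term, organised according to the arcs of $\gamma$. Write $\sphere_1=\bigcup_{i=1}^n I_i$ with $I_i$ the $i$-th arc of length $1/n$, and correspondingly $p_n=\bigcup_{i=1}^n X_i$ with the $i$-th segment $X_i$ the chord subtended by $I_i$. Then
\begin{align*}
	\mathcal{E}(\gamma)=\sum_{i,j}\int_{I_i}\int_{I_j}\Bigl(\tfrac{1}{|\gamma(t)-\gamma(s)|^2}-\tfrac{1}{d_{\sphere_1}(t,s)^2}\Bigr)\,\dd s\,\dd t,
\end{align*}
while $\mathcal{E}_{\mathrm{md},n}(p_n)$ is a sum over non-adjacent pairs $(i,j)$ of $\frac{|X_i||X_j|}{\dist(X_i,X_j)^2}$ minus the regular-$n$-gon normalisation. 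First I would split both into a \emph{far} part (arcs/segments at intrinsic distance bounded below, say by some fixed fraction of the thickness $\Delta[\gamma]$) and a \emph{near} part (neighbouring and close arcs). On the far part the integrands are smooth and uniformly bounded in terms of $\Delta[\gamma]$; here one estimates $\bigl|\,\int_{I_i}\int_{I_j}\frac{\dd s\,\dd t}{|\gamma(t)-\gamma(s)|^2}-\frac{|X_i||X_j|}{\dist(X_i,X_j)^2}\bigr|$ by a midpoint/quadrature argument, using that $\gamma\in C^2$ controls the deviation of an arc from its chord by $O(n^{-2})$ (more precisely by $\maxCurv(\gamma)\,n^{-2}$), and that the distance between far arcs differs from $\dist(X_i,X_j)$ by the same order; summing $O(n^{-2})$ over the $O(n^2)$ far pairs leaves an $O(1)$ discrepancy, and one has to be careful to extract the actual decay in $n$ from a more refined expansion rather than from this crude count.

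The delicate part is the near diagonal, where both the continuous integrand and the discrete summand individually blow up but the energies stay finite because of the subtracted regular term. Here I would exploit the structure of the normalisation: the regular $n$-gon $g_n$ inscribed in the circle has, near the diagonal, exactly the same local geometry (to high order) as the inscribed polygon $p_n$ of a $C^2$ curve, since both look like a slightly bent equilateral polygon on the scale of a few segments. Concretely, for $p_n$ one writes $|X_i||X_j|/\dist(X_i,X_j)^2$ for the nearby pairs and compares with the corresponding quantity for $g_n$, and on the smooth side one compares $\int_{I_i}\int_{I_j}(\cdots)$ with the analogous integral on the circle; the curvature of $\gamma$ enters these near-diagonal comparisons again through $C^2$-estimates and an arc-length-versus-chord Taylor expansion, and the cancellations of the leading singular terms are what make the near-diagonal contribution $o(1)$—this is where one actually loses powers of $n$ and ends up with the $n^{-1/4}$ rate rather than something better.

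The key steps, in order: (1) decompose both energies into the same index sum over arc pairs and subtract pairwise; (2) fix a threshold for ``near'' versus ``far'' in terms of $\Delta[\gamma]$, using the thickness bound to guarantee $|\gamma(t)-\gamma(s)|\gtrsim\Delta[\gamma]$ on the far part; (3) on the far part, Taylor expand $\gamma$ about arc midpoints to replace arcs by chords and bound the error, then sum; (4) on the near part, compare with the exact circle computation, using that $g_n$ realises the near-diagonal behaviour of \emph{any} inscribed polygon to leading order, so the regular-$n$-gon normalisation simultaneously cancels the near-diagonal divergence of $p_n$ and, via $\mathcal{E}(\gamma)$'s own subtraction of $d_{\sphere_1}^{-2}$, of $\gamma$; (5) track the $n$-dependence through all error terms and optimise the threshold to obtain the exponent $1/4$; (6) identify the constant, showing that once $n$ is large the only surviving dependence is on $\Delta[\gamma]^{-1/4}$, yielding $C(\gamma)=290\,\Delta[\gamma]^{-1/4}$. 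The main obstacle I expect is step (4)–(5): making the near-diagonal cancellation quantitative and uniform, since a naive bound there gives no decay at all, and squeezing out the rate $n^{-1/4}$ requires balancing the near/far threshold against competing error terms (the far-part quadrature error grows as the threshold shrinks, the near-part residual grows as it widens), which is exactly the trade-off that produces the fourth-root.
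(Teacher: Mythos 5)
The paper under review is a survey and contains no proof of this theorem---it is quoted from Rawdon--Simon \cite{Rawdon2006a}---but your outline follows essentially the same architecture as their original argument: pair the double sum with the double integral over corresponding arc pairs, split at a near/far threshold controlled by the thickness $\Delta[\gamma]$, use $C^{2}$ chord-versus-arc estimates on the far part, rely on the cancellation between the regular-$n$-gon normalisation and the pointwise $d_{\sphere_{1}}^{-2}$ subtraction near the diagonal, and balance the threshold against the quadrature error to produce the fractional exponent. The only genuinely thin spot is your step (4): the two normalised near-diagonal tails are each individually of order $n$, so the curvature-dependent corrections must be tracked one order beyond leading to make the cancellation quantitative---this is where the cited proof spends most of its effort---but your plan correctly identifies that mechanism and the near/far trade-off as the source of the $n^{-1/4}$ rate.
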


Furthermore, an explicit error bound on the difference between the minimum distance energy of an equilateral polygonal knot
and the M\"obius energy of a smooth knot, appropriately inscribed in the polygonal knot, could be established in terms of thickness and the number of segments:

\begin{theorem}[Explicit energy bound for inscribed smooth knots, \cite{Rawdon2010a}]
	Let $p$ be an equilateral polygon of length $1$. Then there is an ``inscribed'' $C^{2}$ knot $\gamma_{p}$ such that 
	\begin{align*}
		\abs*{\mathcal{E}_{\mathrm{md},n}(p)-\mathcal{E}(\gamma_{p})}
		\leq \frac{C_{1}(p)}{n^{\frac{1}{4}}}+\frac{C_{2}(p)}{n}+\frac{C_{3}(p)}{n^{\frac{5}{4}}}+\frac{C_{4}(p)}{n^{\frac{7}{4}}}+\frac{C_{5}(p)}{n^{2}}
	\end{align*}
	and the constants $C_{i}(p)$ depend in an explicit way on negative powers of $\Delta_{n}[p]$.
\end{theorem}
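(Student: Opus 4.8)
The plan is to produce the comparison curve $\gamma_p$ by rounding off the corners of $p$ and then to match, piece by piece, the double integral defining $\mathcal{E}(\gamma_p)$ against the double sum defining $\mathcal{E}_{\mathrm{md},n}(p)$, splitting both into an \emph{off-diagonal} block and a \emph{near-diagonal} block. For the construction, at each vertex $v_i$ of $p$, with exterior angle $\theta_i$, I would replace a small neighbourhood of $v_i$ by a circular arc tangent to the two incident segments, of radius $r_i$ taken proportional to a fixed positive power of $\Delta_n[p]$ and at the same time small compared with $1/n$, and then smooth the two junctions to second order by a short polynomial patch, so that the resulting curve is $C^2$; after reparametrising by arc length and rescaling to length $1$ it lies in $\mathcal{C}\cap C^2$. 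The quantitative content is that $\gamma_p$ follows the edges of $p$ exactly away from the corners, deviates from the edge it follows by at most $\lesssim r_i\theta_i^2$, has curvature $0$ on the straight stretches and $\lesssim 1/r_i$ (hence at most a negative power of $\Delta_n[p]$) on the corner arcs, has reach bounded below by a power of $\Delta_n[p]$, and carries an arc-length reparametrisation $\psi\colon[0,1]\to[0,1]$ identifying the portion $\Gamma_i\subset\gamma_p$ lying ``over'' $X_i$ with $X_i$ and with $\psi'=1+O\!\big(\sum_i r_i\theta_i\big)$.

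Next I would fix a cut-off $\rho_n\in(0,1)$ and handle the off-diagonal block, that is the pairs $(i,j)$ for which $\Gamma_i$ and $\Gamma_j$ lie at arc-length distance $>\rho_n$. On such pairs $\abs{\gamma_p(t)-\gamma_p(s)}$ is bounded below by a constant times the arc-length separation — this is where $\reach(\gamma_p)\gtrsim$ a power of $\Delta_n[p]$ enters — so the integrand is smooth, and I would Taylor-expand
\[
  \iint_{\Gamma_i\times\Gamma_j}\frac{1}{\abs{\gamma_p(t)-\gamma_p(s)}^{2}}\,\dd s\,\dd t
  \qquad\text{against}\qquad
  \frac{\abs{X_i}\,\abs{X_j}}{\dist(X_i,X_j)^{2}},
\]
developing about the pair of points realising $\dist(X_i,X_j)$ and retaining a few orders; the curvature of $\gamma_p$, the deviation $\norm{\gamma_p-p}_\infty$, and the discrepancies $\abs{\Gamma_i}-\abs{X_i}$ and $\psi'-1$ feed the remainders. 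Summing the leading term over the admissible pairs recovers $\mathcal{U}_{\mathrm{md},n}(p)$ restricted to this block; summing the successive remainders over the $O(n^2)$ pairs against sums of the form $\sum_k(\text{separation})^{-k}$ produces error contributions of orders $n^{-1},n^{-5/4},n^{-7/4},n^{-2}$ with coefficients equal to negative powers of $\Delta_n[p]$. Running the \emph{same} expansion for the regular $n$-gon $g_n$ and the round circle matches the off-diagonal part of $\mathcal{U}_{\mathrm{md},n}(g_n)$ against $\iint_{\abs{t-s}>\rho_n} d_{\sphere_1}(t,s)^{-2}\,\dd s\,\dd t$ up to an explicit additive constant, which is exactly what reconciles the two normalisations (the energy $\mathcal{E}$ subtracts $d_{\sphere_1}^{-2}$ whereas $\mathcal{E}_{\mathrm{md},n}$ subtracts $\mathcal{U}_{\mathrm{md},n}(g_n)$); I would carry this constant along, or absorb it into $\mathcal{E}$.

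For the near-diagonal block I would use the special structure of $\gamma_p$: where both $\gamma_p(t)$ and $\gamma_p(s)$ sit on the same straight stretch the renormalised integrand $\abs{\gamma_p(t)-\gamma_p(s)}^{-2}-d_{\sphere_1}(t,s)^{-2}$ vanishes identically, so the near-diagonal part of $\mathcal{E}(\gamma_p)$ collapses to a sum of contributions localised at the corners, each of which, the corner being a genuine circular arc, is an essentially one-dimensional integral computable up to an explicit error in $r_i$ and $\theta_i$. On the discrete side the near-diagonal part of $\mathcal{E}_{\mathrm{md},n}(p)$ is the sum over non-adjacent pairs $X_i,X_j$ at index distance $\le n\rho_n$, minus the same for $g_n$: here the bound $\dist(X_i,X_j)\gtrsim\Delta_n[p]$ for non-adjacent segments controls the tail, while the bulk has to be compared — again pair by pair, with a short Taylor expansion — to the corner integrals just described and to the near-diagonal part of $\iint d_{\sphere_1}^{-2}$, so that the divergent-in-$n$ portions and the corner portions cancel and only a controlled remainder survives. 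Putting together the off-diagonal error (a negative power of $n$ times a negative power of $\rho_n$ times a negative power of $\Delta_n[p]$) with the near-diagonal remainder (bounded by $\rho_n$ times a negative power of $\Delta_n[p]$) and choosing $\rho_n$ appropriately — it ends up at a scale that is a negative power of $n$ depending on $\Delta_n[p]$ — I would arrive at
\[
  \abs*{\mathcal{E}_{\mathrm{md},n}(p)-\mathcal{E}(\gamma_p)}
  \lesssim \frac{C_1(p)}{n^{1/4}}+\frac{C_2(p)}{n}+\frac{C_3(p)}{n^{5/4}}+\frac{C_4(p)}{n^{7/4}}+\frac{C_5(p)}{n^{2}},
\]
with every $C_i(p)$ an explicit negative power of $\Delta_n[p]$.

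The step I expect to be the main obstacle is the near-diagonal analysis together with the construction that feeds it. One must build $\gamma_p$ so that its reach is genuinely comparable to $\Delta_n[p]$ — otherwise the lower bound on $\abs{\gamma_p(t)-\gamma_p(s)}$ used all through the off-diagonal estimate breaks down — while preventing the corner curvature from deteriorating faster than a controlled power of $\Delta_n[p]$; and one must reconcile the two honestly different renormalisations, which forces the sharp side-by-side analysis of the regular $n$-gon and the circle near the diagonal and is, together with the necessarily crude near-diagonal bounds, what is responsible both for the slow $n^{-1/4}$ leading rate and for the thickness-dependence of all five constants. By comparison the off-diagonal expansion, though long, is a routine if careful quadrature estimate.
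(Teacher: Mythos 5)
This survey states the theorem as a citation to \cite{Rawdon2010a} and contains no proof of it, so I can only measure your sketch against the strategy of that reference (which builds on \cite{Rawdon2006a}). At that level your architecture is the right one: construct $\gamma_p$ by rounding the corners of $p$, split both the double integral and the double sum into a near-diagonal and an off-diagonal block relative to a cut-off $\rho_n$, treat the off-diagonal block as a quadrature problem, reconcile the two renormalisations by running the same analysis for $g_n$ against the circle, and balance $\rho_n$ to produce the $n^{-1/4}$ leading rate. You also correctly identify the near-diagonal cancellation as the hard part.

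The genuine gap is in the construction of $\gamma_p$, and it propagates into the estimates. You require the corner radii $r_i$ to be simultaneously ``proportional to a fixed positive power of $\Delta_n[p]$'' and ``small compared with $1/n$''; these are incompatible in exactly the regime where the theorem is of interest, since $\Delta_n[p]$ is bounded above by $\Delta_n[g_n]\approx\frac{1}{2\pi}$ but is bounded \emph{below} uniformly in $n$ for polygons inscribed in a fixed thick knot, so any fixed positive power of it is of order $1$, not $o(1/n)$. If you resolve the conflict in favour of $r_i\ll 1/n$, two things break. First, $\reach(\gamma_p)\leq\minRad(\gamma_p)=\min_i r_i\ll 1/n$, so the property you explicitly invoke in the off-diagonal block --- $\reach(\gamma_p)$ bounded below by a power of $\Delta_n[p]$, hence $\abs{\gamma_p(t)-\gamma_p(s)}$ bounded below by a multiple of the arc-length separation --- is false for your own curve; the usable lower bound on chord distances for non-adjacent pieces has to come from $\dcsd(p)\geq 2\Delta_n[p]$, i.e.\ from the polygon, not from the reach of the smoothing. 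Second, the right-hand side of the theorem depends only on $n$ and $\Delta_n[p]$, whereas for fixed $p$ one has $\mathcal{E}(\gamma_p)\to\infty$ as $r_i\to 0$ (the renormalised integrand at a corner of exterior angle $\phi_i$ is positive and homogeneous of degree $-2$, which is precisely why polygons have infinite M\"obius energy, as this survey notes), so the radii must be pinned down and bounded below by an explicit function of $n$ and $\Delta_n[p]$ before any of the constants $C_i(p)$ can be claimed to depend on $\Delta_n[p]$ alone. The fix is to take the arcs at radius comparable to $\Delta_n[p]$, tangent to the incident edges --- the averaged edge length in the definition of $\kappa_d$ is engineered so that the tangency points land inside the half-edges --- which restores $\maxCurv(\gamma_p)\lesssim\Delta_n[p]^{-1}$ and a genuine reach bound, but also means the straight stretches may be short or empty, so your near-diagonal simplification (``the renormalised integrand vanishes where both points lie on a straight stretch'') is no longer available and that block must be estimated in full. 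Until the corner scale is fixed coherently, the outline does not close.
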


However, from these results it is not possible to infer that the minimal minimum distance
energy converges to the minimal M\"obius energy in a fixed knot class. 
For the overall minimizers of the minimum distance energy the following result is known:

\begin{theorem}[Minimizers of the minimum distance energy, \cite{Tam2006a,Speller2007a,Speller2008a}]
	The minimizers of the minimum distance energy $\mathcal{E}_{\mathrm{md},n}$ are convex and for $n\in\{4,5\}$ these minimizers are the regular $n$-gon.
\end{theorem}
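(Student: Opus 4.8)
The plan is to prove first that a minimizer exists and cannot degenerate, then that any minimizer is a convex planar polygon, and finally, for $n\in\{4,5\}$, that the only such polygon attaining the minimum is the regular one. Since $\mathcal{E}_{\mathrm{md},n}$ is scale invariant I normalise $\diam(p)=1$ and fix one vertex at the origin, so that a minimising sequence is precompact. Along such a sequence no edge can collapse to a point: if $\abs{X_{k}}\to 0$ then its two neighbours $X_{k-1}$ and $X_{k+1}$, which form a \emph{non-adjacent} pair when $n\geq 4$, tend to share a vertex, so $\dist(X_{k-1},X_{k+1})\to 0$ and $\mathcal{U}_{\mathrm{md},n}\to\infty$; this is impossible because the regular $n$-gon is admissible with $\mathcal{E}_{\mathrm{md},n}(g_{n})=0$. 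Hence the limit is again a polygon with $n$ edges of positive length, $\mathcal{U}_{\mathrm{md},n}$ is continuous there, and we obtain a minimizer $p^{\ast}$ with $\mathcal{E}_{\mathrm{md},n}(p^{\ast})\leq 0$.

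Next, convexity. If a planar polygon is not convex it contains a \emph{pocket}: a sub-arc $\sigma$ whose two endpoints lie on a line $\ell$ supporting the convex hull, with $\sigma\not\subset\ell$. Reflecting $\sigma$ across $\ell$ is an isometry of $\sigma$, hence it leaves unchanged every edge length and every distance between two edges both lying in $\sigma$ or both lying in its complement. For a point $a$ of the complement and a point $b$ of $\sigma$ one has, writing the signed distance to $\ell$ as the last coordinate, $\abs{a-b}^{2}=\abs{a-b}_{\parallel}^{2}+(s+t)^{2}$ with $s,t\geq 0$ before the move, which becomes $\abs{a-b}_{\parallel}^{2}+(s-t)^{2}$ after; thus no distance between an edge of $\sigma$ and an edge of its complement decreases, and some non-adjacent such pair strictly increases because $\sigma$ really leaves $\ell$. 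Hence $\mathcal{U}_{\mathrm{md},n}$ strictly decreases provided the reflection is chosen to keep the polygon simple --- this is the content of Erd\H{o}s--Nagy-type convexification --- contradicting minimality. A separate, more delicate deformation is needed to exclude non-planar minimizers; I return to this in the last paragraph.

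For $n\in\{4,5\}$ every non-adjacent pair of edges is of the form $(X_{i},X_{i+2})$ with indices modulo $n$, and the vertices $P_{i+1}\in X_{i}$ and $P_{i+2}\in X_{i+2}$ are joined by the edge $X_{i+1}$, so $\dist(X_{i},X_{i+2})\leq\abs{X_{i+1}}$ (for $n=4$ also $\dist(X_{1},X_{3})\leq\abs{X_{4}}$, whence $\dist(X_{1},X_{3})^{2}\leq\abs{X_{2}}\abs{X_{4}}$, and symmetrically). Writing $a_{i}=\abs{X_{i}}$, the definition of $\mathcal{U}_{\mathrm{md},n}$, these bounds and the arithmetic--geometric mean inequality give, for $n=5$,
\begin{align*}
\mathcal{U}_{\mathrm{md},5}(p)\;\geq\;2\sum_{i=1}^{5}\frac{a_{i}a_{i+2}}{a_{i+1}^{2}}\;\geq\;10\Bigl(\prod_{i=1}^{5}\frac{a_{i}a_{i+2}}{a_{i+1}^{2}}\Bigr)^{1/5}\;=\;10\;=\;\mathcal{U}_{\mathrm{md},5}(g_{5}),
\end{align*}
the product being $1$ since every $a_{j}$ occurs there with total exponent $0$; for $n=4$ the same computation reduces to $x/y+y/x\geq 2$ with $x=a_{1}a_{3}$ and $y=a_{2}a_{4}$, giving $\mathcal{U}_{\mathrm{md},4}(p)\geq 4=\mathcal{U}_{\mathrm{md},4}(g_{4})$. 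In either case $\mathcal{E}_{\mathrm{md},n}(p)\geq 0=\mathcal{E}_{\mathrm{md},n}(g_{n})$, so the regular $n$-gon is a minimizer.

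It remains to identify the minimizers with $g_{n}$ up to congruence. Equality in the previous display forces, via the arithmetic--geometric mean step, that $p$ is equilateral, and via the distance bound that each $\dist(X_{i},X_{i+2})$ is attained precisely at the shared-neighbour vertices $P_{i+1},P_{i+2}$, which constrains the interior angles at consecutive vertices; feeding this, together with the convexity and planarity already obtained and with the closing relation $\sum_{i}(\text{edge vectors})=0$, one wants to conclude that all angles equal the regular value and hence $p\cong g_{n}$. This rigidity step, in tandem with ruling out non-planar minimizers, is where I expect the real difficulty to lie: the lower bound is already saturated by a range of convex equilateral polygons as soon as one keeps only the edge-length data, so the uniqueness has to be extracted from the precise geometry of the constraint $\dist(X_{i},X_{i+2})=\abs{X_{i+1}}$ and from the polygon's closure rather than from the length bookkeeping alone.
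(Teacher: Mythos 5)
The paper itself offers no proof of this theorem---it is a survey item quoted from \cite{Tam2006a,Speller2007a,Speller2008a}---so your attempt can only be measured against the statement. The part you do complete is correct and is the right idea: for $n\in\{4,5\}$ the estimates $\dist(X_{i},X_{i+2})\le\abs{X_{i+1}}$ (and, for $n=4$, $\dist(X_{1},X_{3})^{2}\le\abs{X_{2}}\abs{X_{4}}$) together with the AM--GM inequality give $\mathcal{U}_{\mathrm{md},n}(p)\ge\mathcal{U}_{\mathrm{md},n}(g_{n})$, i.e.\ $\mathcal{E}_{\mathrm{md},n}\ge 0$ with equality for the regular $n$-gon, so $g_{n}$ \emph{is} a minimizer. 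But two pieces that the theorem actually asserts are left unproven, and you flag both yourself, which means the proposal is a partial proof rather than a proof.

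First, the pocket-reflection argument only excludes \emph{planar} non-convex minimizers; the energy lives on space polygons, so ``minimizers are convex'' also requires showing minimizers are planar, and no argument is given. (As a side remark, your reflection computation has before and after interchanged: on the hull side both points have nonnegative heights $s,t$, so the separation is $(s-t)^{2}$ before the flip and $(s+t)^{2}$ after; the displayed formulas assert the opposite and literally contradict the conclusion you then draw, although that conclusion is the geometrically correct one.) Second, the equality analysis is missing. AM--GM equality forces $p$ equilateral, and $\dist(X_{i},X_{i+2})=\abs{X_{i+1}}$ forces every interior angle to be at least $\pi/2$ (the corner of the convex squared-distance function on the parameter square must be its minimum). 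For $n=4$ this gap is cheap to close: by the polygonal Fenchel theorem the interior angles of a closed quadrilateral sum to at most $2\pi$, with equality only for planar convex ones, so four angles $\ge\pi/2$ force a rectangle, hence a square. For $n=5$, however, an equilateral convex pentagon with all angles $\ge\pi/2$ need not be regular, so the identification of the minimizer genuinely needs the finer geometric input you defer to the last paragraph. As it stands you have proved that the regular $n$-gon attains the minimum for $n\in\{4,5\}$, but neither the convexity claim in full generality nor the uniqueness implicit in ``these minimizers are the regular $n$-gon''.
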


This evidence supports the conjecture that the regular $n$-gon minimizes the minimum distance energy in the class of $n$-gons.
Numerical experiments regarding the minimum distance energy under the elastic flow were carried out (see \cite{Hermes2008a}).
\bigskip

Another, more obvious, discrete version of the M\"obius energy was used for numerical experiments (see \cite{Kim1993a}).
This energy, defined on the class of arc length parametrisations of polygons of length $L$ with $n$ segments, is given by
\begin{align}\label{discreteMoebiusenergy}
	\mathcal{E}_{n}(p)\vcentcolon=\sum_{\substack{i,j=1\\i\not=j}}^{n}\left(\frac{1}{\abs{p(a_{j})-p(a_{i})}^{2}}-\frac{1}{d(a_{j},a_{i})^{2}}\right)d(a_{i+1},a_{i})d(a_{j+1},a_{j}),
\end{align}
where the $a_{i}$ are consecutive points on $\sphere_{L}$ and $p(a_{i})$ the vertices of the polygon.
Also this energy is scale invariant and it is easily seen to be continuous on the space of nonsingular polygons with $n$ segments.
The relationship between this discrete M\"obius energy and the classic M\"obius energy could be established in terms of a $\Gamma$-convergence result:

\begin{theorem}[M\"obius energy is $\Gamma$-limit of discrete M\"obius energies, \cite{Scholtes2014e}]\label{maintheoremgammamoebius}
	For $q\in [1,\infty]$, $\norm{\cdot}\in\{\norm{\cdot}_{L^{q}(\sphere_{1},\R^{d})},\norm{\cdot}_{W^{1,q}(\sphere_{1},\R^{d})}\}$ and every tame knot class $\mathcal{K}$ holds 
	\begin{align*}
		\mathcal{E}_{n}\xrightarrow{\Gamma} \mathcal{E}\quad\text{on }\left(\mathcal{C}_{1,\mathrm{p}}(\mathcal{K}),\norm{\cdot}\right).
	\end{align*}
\end{theorem}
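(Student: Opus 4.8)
The plan is to establish the two directions of $\Gamma$-convergence separately, namely the $\liminf$ inequality (lower bound) and the existence of a recovery sequence (upper bound), with respect to the chosen norm on $\mathcal{C}_{1,\mathrm{p}}(\mathcal{K})$. The starting observation is that $\mathcal{E}_n(p)$ for $p\in\mathcal{P}_n$ is, up to controlled error, a Riemann-type sum for the double integral defining $\mathcal{E}$. Indeed, writing $\Sigma_L = \sphere_L\times\sphere_L$ and partitioning it into the $n^2$ rectangles $[a_i,a_{i+1}]\times[a_j,a_{j+1}]$, the summand in \eqref{discreteMoebiusenergy} is exactly the value of the integrand of $\mathcal{E}$ at the corner $(a_i,a_j)$, weighted by the area $d(a_{i+1},a_i)\,d(a_{j+1},a_j)$ of the corresponding rectangle. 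So the real content is a quantitative comparison between this corner-evaluated Riemann sum and the integral, uniform over sequences $p_n\to\gamma$ in the relevant topology. Here the diagonal singularity of the integrand is the source of all the difficulty, and it must be handled by splitting $\Sigma_L$ into a near-diagonal strip $\{d_{\sphere_L}(t,s)\le\delta\}$ and its complement.

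For the $\liminf$ inequality, I would take $p_n\to\gamma$ with $\sup_n\mathcal{E}_n(p_n)<\infty$ (otherwise nothing to prove) and first record that the uniform energy bound forces the polygons $p_n$ to stay bi-Lipschitz with a uniform constant — this is the discrete analogue of the regularity theory for $\mathcal{E}$ and should follow from bounding the near-diagonal part of $\mathcal{E}_n$ from below (the far part is bounded below trivially, since $-\int\int d_{\sphere_L}^{-2}$ is a finite constant for length-$1$ curves). With a uniform bi-Lipschitz bound, on the complement of the $\delta$-strip the integrand of $\mathcal{E}$ is bounded and (Lipschitz-)continuous in its arguments and continuous in $\gamma$ with respect to, say, $L^\infty$, so the off-diagonal Riemann sums converge to the off-diagonal integral; the near-diagonal contribution is nonnegative in the limit (after subtracting the universal diagonal term), and one concludes by first letting $n\to\infty$ and then $\delta\to0$, using that $\gamma$ itself has the claimed bi-Lipschitz bound in the limit so that the near-diagonal tail of $\mathcal{E}(\gamma)$ vanishes as $\delta\to0$.

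For the recovery sequence I would argue in two stages. When $\gamma\in\mathcal{C}_{1,\mathrm{p}}(\mathcal{K})$ already has $\mathcal{E}(\gamma)<\infty$, i.e. $\gamma\in W^{3/2,2}$ by \cite{Blatt2012a}, I would take $p_n$ to be the equilateral (or arc-length-equidistributed) inscribed polygons; the already-cited inscribed-polygon estimates — the $C(\gamma)n^{-1/4}$ bound and its refinements in the theorems of Rawdon et al. above — give exactly $\mathcal{E}_n(p_n)\to\mathcal{E}(\gamma)$, at least for $C^2$ curves, and a density argument within the knot class $\mathcal{K}$ (approximating a finite-energy knot by $C^\infty$ knots of the same knot type, which is standard) upgrades this to all finite-energy $\gamma$, provided one checks that $\mathcal{E}$ is continuous along such approximations — which again follows from the quantitative bounds. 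When $\mathcal{E}(\gamma)=\infty$ there is nothing to recover beyond constructing \emph{some} sequence $p_n\to\gamma$ in $\mathcal{P}_n(\mathcal{K})$, and inscribed polygons again do the job topologically (for $n$ large the inscribed polygon is ambient-isotopic to $\gamma$), trivially satisfying $\liminf\mathcal{E}_n(p_n)\ge\mathcal{E}(\gamma)=+\infty$ only if that liminf is genuinely infinite — which need not hold, so in fact one must instead produce a recovery sequence with $\limsup\mathcal{E}_n(p_n)\le\mathcal{E}(\gamma)=\infty$, and that is automatic.

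The main obstacle I anticipate is the uniformity of the Riemann-sum error in the near-diagonal strip: one needs that a uniform bound on $\mathcal{E}_n(p_n)$ (not on any smooth norm of $p_n$) already yields a uniform bi-Lipschitz constant for the $p_n$, the discrete counterpart of the statement ``finite Möbius energy $\Rightarrow$ bi-Lipschitz,'' and then that this constant passes to the limit curve. Establishing this discrete a priori estimate — essentially that $\mathcal{E}_{\mathrm{md}}$-type lower bounds control $\abs{p(a_i)-p(a_j)}/d(a_i,a_j)$ from below uniformly in $n$ — is the technical heart, and everything else (convergence of the off-diagonal sums, the density argument, the topological stability of inscribed polygons) is comparatively routine given the already-quoted results.
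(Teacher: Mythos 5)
Your overall architecture (liminf plus recovery sequence, Riemann-sum comparison with a near-diagonal/off-diagonal splitting, inscribed polygons as recovery sequences) matches the strategy of \cite{Scholtes2014e}, but the step you yourself single out as the ``technical heart'' of the liminf inequality is both unproven and mislocated. You want a uniform bound on $\mathcal{E}_{n}(p_{n})$ alone to yield a uniform bi-Lipschitz constant for the polygons $p_{n}$. As a statement about the polygons as curves this is false: the text explicitly notes that, unlike the minimum distance energy \eqref{MDenergy}, bounded $\mathcal{E}_{n}$ does not exclude double points for fixed $n$ (two non-adjacent segments may cross while every vertex pair stays at distance of order $1/n$), so no embeddedness, let alone a bi-Lipschitz bound, is available at fixed $n$. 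A vertex-level version of your estimate can be extracted by a discretisation of the Freedman--He--Wang dyadic argument, but you do not supply it, and more importantly it is not needed. Since every curve in $\mathcal{C}$ is parametrised by arc length, $\abs{p(a_{j})-p(a_{i})}\leq d(a_{j},a_{i})$, so \emph{every} summand of \eqref{discreteMoebiusenergy} is nonnegative; one may therefore simply discard the near-diagonal part for the lower bound and, on each off-diagonal cell $[a_{i},a_{i+1}]\times[a_{j},a_{j+1}]$, bound the corner value from below by the integrand of $\mathcal{E}$ at an arbitrary point of the cell up to errors $O(1/n)$, using only the universal $1$-Lipschitz bound and the uniform convergence $p_{n}\to\gamma$ (which $L^{q}$ or $W^{1,q}$ convergence of unit-speed curves upgrades to via Arzel\`a--Ascoli). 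Fatou then gives the off-diagonal integral for $\gamma$, and monotone convergence in $\delta$ finishes the liminf inequality with no a priori estimate on the $p_{n}$ at all.

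On the recovery side there are two further problems. First, the quantitative inscribed-polygon bound you invoke, $C(\gamma)n^{-1/4}$, is the Rawdon--Simon estimate for the \emph{minimum distance} energy $\mathcal{E}_{\mathrm{md},n}$, a different functional; the statements relevant to $\mathcal{E}_{n}$ are Proposition \ref{orderofconvergence} and Corollary \ref{convergencemoebiusinscribedpolygons}. Second, your density argument (approximate a finite-energy knot by $C^{\infty}$ knots of the same type and diagonalise) is an avoidable detour that creates its own obligations: mollification destroys the arc-length parametrisation and fixed length, and the continuity of $\mathcal{E}$ along such approximations is exactly the kind of statement you would still have to prove. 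The source instead proves $\mathcal{E}_{n}(p_{n})\to\mathcal{E}(\gamma)$ for inscribed polygons of \emph{every} finite-energy curve directly: here the bi-Lipschitz control legitimately enters, but it is inherited by the vertices of $p_{n}$ from the limit curve $\gamma$ (finite M\"obius energy makes $\gamma$ bi-Lipschitz), not derived from the discrete energy. You should also verify that the inscribed polygons can be chosen equilateral, lie in $\mathcal{K}$ for large $n$, and converge in the strongest topology claimed, namely $W^{1,\infty}$; and note that for targets with $\mathcal{E}(\gamma)=\infty$ the $\limsup$ inequality is satisfied by the constant sequence under the $+\infty$-extension convention, so no construction is needed there.
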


Since we already know that minimizers of $\mathcal{E}$ in prime knot classes exist and are smooth, Theorem \ref{dalmasoproposition} implies the convergence of discrete almost minimizers:

\begin{corollary}[Convergence of discrete almost minimizers, \cite{Scholtes2014e}]\label{corollaryconvergenceminmizers}
	Let $\mathcal{K}$ be a tame prime knot class, $p_{n}\in\mathcal{P}_{n}(\mathcal{K})$ with
	\begin{align*}
		\Bigl|\inf_{\mathcal{P}_{n}(\mathcal{K})}\mathcal{E}_{n}-\mathcal{E}_{n}(p_{n})\Bigr|\to 0\qquad\text{and}\qquad
		p_{n}\to\gamma\in\mathcal{C}(\mathcal{K})\text{ in }L^{1}(\sphere_{1},\R^{d}).
	\end{align*}
	Then $\gamma$ is a minimizer of $\mathcal{E}$ in $\mathcal{C}(\mathcal{K})$ and $\lim_{k\to\infty}\mathcal{E}_{n}(p_{n})=\mathcal{E}(\gamma)$.
\end{corollary}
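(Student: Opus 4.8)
The plan is to read the statement as an instance of the fundamental theorem of $\Gamma$-convergence, Theorem~\ref{dalmasoproposition}, fed with the $\Gamma$-convergence $\mathcal{E}_{n}\xrightarrow{\Gamma}\mathcal{E}$ on $\bigl(\mathcal{C}_{1,\mathrm{p}}(\mathcal{K}),\norm{\cdot}_{L^{1}(\sphere_{1},\R^{d})}\bigr)$ supplied by Theorem~\ref{maintheoremgammamoebius}, the only extra ingredient being that $\mathcal{E}$-minimizers in prime knot classes are smooth. As a preliminary step, I would fix a minimizer $\gamma_{\ast}$ of $\mathcal{E}$ in $\mathcal{C}(\mathcal{K})$: such a curve exists by the theorem of Freedman--He--Wang, since $\mathcal{K}$ is tame and prime, and by the known regularity theory for critical points of the M\"obius energy it is smooth; in particular $\gamma_{\ast}\in\mathcal{C}_{1,\mathrm{p}}(\mathcal{K})$, so that the recovery sequences furnished by Theorem~\ref{maintheoremgammamoebius} are available for it.

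Then I would run the usual chain of inequalities. Extending each $\mathcal{E}_{n}$ by $+\infty$ outside $\mathcal{P}_{n}(\mathcal{K})$ — which is implicit in Theorem~\ref{maintheoremgammamoebius}, since $\mathcal{E}_{n}$ makes sense only on equilateral $n$-gons — the hypothesis $\bigl|\inf_{\mathcal{P}_{n}(\mathcal{K})}\mathcal{E}_{n}-\mathcal{E}_{n}(p_{n})\bigr|\to 0$ says exactly that $(p_{n})$ is asymptotically minimizing for $(\mathcal{E}_{n})$. Choosing a recovery sequence $r_{n}\to\gamma_{\ast}$ with $\limsup_{n}\mathcal{E}_{n}(r_{n})\le\mathcal{E}(\gamma_{\ast})$ (which, having finite limit energy, forces $r_{n}\in\mathcal{P}_{n}(\mathcal{K})$ for large $n$), the asymptotic minimality gives $\mathcal{E}_{n}(p_{n})\le\mathcal{E}_{n}(r_{n})+o(1)$, hence $\limsup_{n}\mathcal{E}_{n}(p_{n})\le\mathcal{E}(\gamma_{\ast})=\min_{\mathcal{C}(\mathcal{K})}\mathcal{E}$. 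On the other hand, since $p_{n}\to\gamma$ in $L^{1}$, the $\liminf$-inequality of Theorem~\ref{maintheoremgammamoebius} yields $\mathcal{E}(\gamma)\le\liminf_{n}\mathcal{E}_{n}(p_{n})$. Using $\gamma\in\mathcal{C}(\mathcal{K})$ to bound $\min_{\mathcal{C}(\mathcal{K})}\mathcal{E}\le\mathcal{E}(\gamma)$, these combine to
\begin{align*}
	\min_{\mathcal{C}(\mathcal{K})}\mathcal{E}\;\le\;\mathcal{E}(\gamma)\;\le\;\liminf_{n\to\infty}\mathcal{E}_{n}(p_{n})\;\le\;\limsup_{n\to\infty}\mathcal{E}_{n}(p_{n})\;\le\;\mathcal{E}(\gamma_{\ast})\;=\;\min_{\mathcal{C}(\mathcal{K})}\mathcal{E},
\end{align*}
so equality holds throughout: $\gamma$ minimizes $\mathcal{E}$ in $\mathcal{C}(\mathcal{K})$ and $\lim_{n\to\infty}\mathcal{E}_{n}(p_{n})=\mathcal{E}(\gamma)$.

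I do not expect a real obstacle inside this deduction; once Theorems~\ref{maintheoremgammamoebius} and~\ref{dalmasoproposition} are granted it is essentially bookkeeping. The substance sits in Theorem~\ref{maintheoremgammamoebius} — above all the construction of recovery sequences made of \emph{equilateral $n$-gons that stay in the prescribed knot class $\mathcal{K}$}, together with the $\liminf$-inequality — and in the smoothness of energy minimizers. The one point of the present argument that is not purely formal is the mismatch between the space $\mathcal{C}_{1,\mathrm{p}}(\mathcal{K})$ carrying the $\Gamma$-convergence and the larger space $\mathcal{C}(\mathcal{K})$ in which minimality is claimed; this is exactly why the smoothness of $\mathcal{E}$-minimizers must be invoked (to place the comparison curve $\gamma_{\ast}$ in $\mathcal{C}_{1,\mathrm{p}}(\mathcal{K})$), and why one needs the $\liminf$-inequality of Theorem~\ref{maintheoremgammamoebius} in the form valid for an arbitrary $L^{1}$-limit $\gamma\in\mathcal{C}(\mathcal{K})$ of polygons $p_{n}\in\mathcal{P}_{n}(\mathcal{K})$.
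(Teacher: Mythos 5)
Your proposal is correct and follows essentially the same route as the paper, which derives the corollary from Theorem~\ref{maintheoremgammamoebius} together with Theorem~\ref{dalmasoproposition} and the existence and smoothness of M\"obius-energy minimizers in prime knot classes; you merely unwind the standard $\Gamma$-convergence argument explicitly. Your identification of the one non-formal point --- using the smooth minimizer $\gamma_{\ast}$ as the comparison curve in $\mathcal{C}_{1,\mathrm{p}}(\mathcal{K})$ and needing the $\liminf$-inequality for limits that are merely in $\mathcal{C}(\mathcal{K})$ --- is exactly the role the paper assigns to the remark preceding the corollary.
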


The result remains true for subsequences, where the number of edges is allowed to increase by more than $1$ for two consecutive polygons.
Since all curves are parametrised by arc length, it is not hard to find a
subsequence of the almost minimizers that converges in $C^{0}$, but generally this does not guarantee that the limit curve belongs to the same knot class or is parametrised by arc length.
For polygons inscribed in a $C^{1,1}$ curve, there is an estimate on the order of convergence:

\begin{proposition}[Order of convergence for M\"obius energy, \cite{Scholtes2014e}]\label{orderofconvergence}
	Let $\gamma\in C^{1,1}(\sphere_{L},\R^{d})$ be parametrised by arc length and $c,\overline c>0$. Then for every $\epsilon\in (0,1)$ there is a constant $C_{\epsilon}>0$ such that 
	\begin{align*}
		\abs{\mathcal{E}(\gamma)-\mathcal{E}_{n}(p_{n})}\leq \frac{C_{\epsilon}}{n^{1-\epsilon}}
	\end{align*}
	for every inscribed polygon $p_{n}$ given by a subdivision $b_{k}$, $k=1,\ldots,n$ of $\sphere_{L}$ such that 
	\begin{align*}
		\frac{c}{n}\leq \min_{k=1,\ldots,n}\abs{\gamma(b_{k+1})-\gamma(b_{k})} \leq \max_{k=1,\ldots,n}\abs{\gamma(b_{k+1})-\gamma(b_{k})}\leq \frac{\overline c}{n}.
	\end{align*}
\end{proposition}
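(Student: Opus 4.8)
The plan is to compare the double integral defining $\mathcal{E}(\gamma)$ with the double sum defining $\mathcal{E}_n(p_n)$ term by term, by viewing the discrete energy as a Riemann-type sum for the continuous one. Split the circle $\sphere_L$ into the arcs $I_i=[b_i,b_{i+1}]$; then $\mathcal{E}_n(p_n)=\sum_{i\neq j}\bigl(\abs{p_n(b_j)-p_n(b_i)}^{-2}-d(b_j,b_i)^{-2}\bigr)\,\abs{I_i}\abs{I_j}$, while $\mathcal{E}(\gamma)=\sum_{i,j}\int_{I_i}\int_{I_j}\bigl(\abs{\gamma(t)-\gamma(s)}^{-2}-d(t,s)^{-2}\bigr)\dd s\,\dd t$. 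So the difference is a sum over pairs $(i,j)$ of local errors, plus the ``diagonal'' contributions where $i=j$ or $I_i,I_j$ are adjacent, which appear in $\mathcal{E}(\gamma)$ but not (or only partially) in $\mathcal{E}_n(p_n)$. The first step is to record the geometric consequences of $\gamma\in C^{1,1}$ and of the mesh condition $c/n\le\abs{\gamma(b_{k+1})-\gamma(b_k)}\le\overline c/n$: arc length and chord length are comparable up to $O(n^{-2})$, so $\abs{I_k}\sim n^{-1}$ uniformly, and $\abs{\gamma(t)-\gamma(s)}\geqsim \min\{d(t,s),\Delta[\gamma]\}$ by the bi-Lipschitz estimate that finite thickness provides (here one invokes that a $C^{1,1}$ arc length curve has positive thickness $\Delta[\gamma]>0$, cf. Section \ref{sectionthickness}).

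The second step is the \emph{off-diagonal} estimate. On a pair of arcs $I_i,I_j$ that are not adjacent, the integrand $f(t,s):=\abs{\gamma(t)-\gamma(s)}^{-2}-d(t,s)^{-2}$ is Lipschitz in each variable with a local Lipschitz constant controlled by $\min\{d(b_i,b_j),\Delta[\gamma]\}^{-3}$ — this comes from differentiating, using $\abs{\gamma'}=1$ and the $C^{1,1}$ bound on $\gamma'$ for the first term and elementary estimates for the second. Hence on $I_i\times I_j$,
\begin{align*}
	\abs*{\int_{I_i}\int_{I_j}f(t,s)\dd s\,\dd t - f(b_i,b_j)\abs{I_i}\abs{I_j}}
	\leqsim \frac{\abs{I_i}\abs{I_j}\,(\abs{I_i}+\abs{I_j})}{\min\{d(b_i,b_j),\Delta[\gamma]\}^{3}}
	\leqsim \frac{1}{n^{3}\min\{d(b_i,b_j),\Delta[\gamma]\}^{3}},
\end{align*}
and one further replaces $f(b_i,b_j)\abs{I_i}\abs{I_j}$ by $\bigl(\abs{p_n(b_j)-p_n(b_i)}^{-2}-d(b_j,b_i)^{-2}\bigr)\abs{I_i}\abs{I_j}$ at the cost of another comparable error, since chord and arc agree to $O(n^{-2})$. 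Summing over all non-adjacent pairs, the contribution with $d(b_i,b_j)\leqsim\Delta[\gamma]$ behaves like $n^{-3}\sum_{k}\,(k/n)^{-3}\cdot n\sim n^{-1}\sum_{k=1}^{O(n)}k^{-3}$, which is $O(n^{-1})$; the far-away pairs where $d(b_i,b_j)\gtrsim\Delta[\gamma]$ contribute $O(n^{-3}\cdot n^2)=O(n^{-1})$. So the off-diagonal part is already $O(n^{-1})$, comfortably within the claimed $C_\epsilon n^{-1+\epsilon}$.

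**The main obstacle** is the \emph{near-diagonal} region: the arcs $I_i\times I_j$ with $i=j$ or $\abs{i-j}\le 1$, which are present in $\mathcal{E}(\gamma)$ but excluded from $\mathcal{E}_n(p_n)$, and more generally the pairs with $d(b_i,b_j)$ small, where the naive $\min\{d,\Delta\}^{-3}$ bound blows up. Here one must use the genuine cancellation built into the integrand: for $\gamma\in C^{1,1}$, a Taylor expansion gives $\abs{\gamma(t)-\gamma(s)}^{2}=d(t,s)^{2}\bigl(1+O(d(t,s)^{2})\bigr)$, so $f(t,s)=O(1)$ near the diagonal rather than singular, and the integral of $f$ over the adjacent-arc strip is $O(n^{-2})$ per arc, hence $O(n^{-1})$ in total — except that the $C^{1,1}$ regularity is exactly borderline and the error constant absorbs a logarithm or a small power, which is precisely why the statement carries the $\epsilon$-loss and the $\epsilon$-dependent constant $C_\epsilon$ rather than a clean $n^{-1}$. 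The way to handle this cleanly is to interpolate: split the near-diagonal pairs at scale $d(b_i,b_j)\sim n^{-\delta}$ for a suitable $\delta=\delta(\epsilon)$, estimate the very-close pairs by the boundedness of $f$ (Taylor), and the intermediate pairs by the Lipschitz estimate; optimising $\delta$ produces the $n^{-1+\epsilon}$ rate. Finally one assembles the pieces, noting that all implied constants depend only on $\norm{\gamma'}_{C^{0,1}}$, on $\Delta[\gamma]$, and on $c,\overline c$, which are fixed, yielding the stated $C_\epsilon$.
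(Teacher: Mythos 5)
Your overall architecture --- read $\mathcal{E}_n(p_n)$ as a one-point quadrature of $\mathcal{E}(\gamma)$ over the cells $I_i\times I_j$, handle the diagonal and adjacent cells via the Taylor-expansion boundedness of the integrand, control the chord-versus-arc replacements, and estimate the remaining cells by a Lipschitz bound --- is the right one. But the off-diagonal estimate, which you present as the easy part, is where the proof actually lives, and as written it fails. With the Lipschitz constant $\min\{d(b_i,b_j),\Delta[\gamma]\}^{-3}$ the per-cell error is $\lesssim n^{-3}(k/n)^{-3}=k^{-3}$ for $\abs{i-j}=k$, and there are about $n$ cells for each $k$, so the off-diagonal total is $\sum_k n\,k^{-3}=O(n)$, not $O(n^{-1})$: your line ``$n^{-3}\sum_k(k/n)^{-3}\cdot n\sim n^{-1}\sum_k k^{-3}$'' drops a factor of $n^{2}$. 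This is not a reparable slip but a symptom of the approach: bounding the two terms of $f$ separately genuinely produces a divergent quadrature error, since $\iint_{d(t,s)>1/n}d(t,s)^{-2}$ and its Riemann sum are each themselves of size $n$, so only an $O(1)$ relative error is available term by term. Nor does your final interpolation rescue it: splitting at $d\sim n^{-\delta}$, using $\norm{f}_{L^\infty}$ below that scale and the $d^{-3}$ Lipschitz bound above it, optimises at $\delta=1/3$ and yields only a rate of $n^{-1/3}$.

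The missing ingredient is that the cancellation between $\abs{\gamma(t)-\gamma(s)}^{-2}$ and $d(t,s)^{-2}$, which you invoke only for the zeroth-order bound $f=O(1)$ near the diagonal, must be exploited at first order and at \emph{every} scale below a fixed threshold. For $\gamma\in C^{1,1}$ and $d=d(t,s)$ small one has $\langle\gamma'(t),\gamma(t)-\gamma(s)\rangle=d+O(\norm{\gamma''}_{L^\infty}^{2}d^{3})$ and $\abs{\gamma(t)-\gamma(s)}^{2}=d^{2}+O(\norm{\gamma''}_{L^\infty}^{2}d^{4})$, and inserting these into $\partial_t f=-2\langle\gamma'(t),\gamma(t)-\gamma(s)\rangle\abs{\gamma(t)-\gamma(s)}^{-4}+2d^{-3}$ shows that the leading $d^{-3}$ terms cancel, leaving $\abs{\partial_t f}+\abs{\partial_s f}\lesssim \norm{\gamma''}_{L^\infty}^{2}/d$. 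With this the per-cell error is $n^{-3}\cdot(n/k)=n^{-2}k^{-1}$, and the off-diagonal sum becomes $\sum_k n\cdot n^{-2}k^{-1}=n^{-1}\sum_k k^{-1}=O(n^{-1}\log n)\le C_\epsilon n^{-1+\epsilon}$; this logarithm, not the borderline regularity at the diagonal, is the true source of the $\epsilon$-loss in the statement. Establishing this first-order cancellation estimate is the essential lemma behind the cited result; without it your argument proves at best an $n^{-1/3}$ rate.
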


This is in accordance with the data from computer experiments, which suggests that the order of convergence should be roughly $1$ (see \cite{Kim1993a}). 
If no regularity is assumed, the order of convergence might not be under control, but still, the energies converge:

\begin{corollary}[Convergence of M\"obius energies of inscribed polygons, \cite{Scholtes2014e}]\label{convergencemoebiusinscribedpolygons}
	Let $\gamma\in\mathcal{C}$ with $\mathcal{E}(\gamma)<\infty$ and $p_{n}$ as in Proposition \ref{orderofconvergence}. Then $\lim_{n\to\infty}\mathcal{E}_{n}(p_{n})=\mathcal{E}(\gamma)$.
\end{corollary}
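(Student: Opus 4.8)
The plan is to estimate the difference between $\mathcal{E}_n(p_n)$ and $\mathcal{E}(\gamma)$ by decomposing each of the two energies, at a threshold $d_0>0$ to be sent to $0$ at the very end, into a \emph{far} part in which the intrinsic distance of the two arguments is at least $d_0$ and a \emph{near} part in which it is smaller. Two structural observations will be used repeatedly. First, the Euclidean chord between two points never exceeds the arc-length distance joining them, both on $\gamma$ and along an inscribed polygon; hence the integrand $\abs{\gamma(s)-\gamma(t)}^{-2}-d(s,t)^{-2}$ of $\mathcal{E}$ and the summand $\abs{p_n(a_i)-p_n(a_j)}^{-2}-d(a_i,a_j)^{-2}$ of $\mathcal{E}_n$ are pointwise nonnegative. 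Second, $\mathcal{E}(\gamma)<\infty$ forces $\gamma$ to be simple and its arc-length parametrisation to belong to $W^{3/2,2}$; in particular, by nonnegativity, the integrand of $\mathcal{E}$ lies in $L^{1}(\sphere_{1}^{2})$.

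For the far part I would first note that on $\{(s,t):d(s,t)\ge d_0\}$ the integrand of $\mathcal{E}$ is continuous and bounded, since simplicity of $\gamma$ gives $\abs{\gamma(s)-\gamma(t)}\ge\rho(d_0)>0$ there. As the $p_n$ are inscribed in $\gamma$ they converge to $\gamma$ uniformly, so on the far region the discrete summands converge uniformly to the integrand of $\mathcal{E}$, and the corresponding Riemann sums converge to $\iint_{d(s,t)\ge d_0}\bigl(\abs{\gamma(s)-\gamma(t)}^{-2}-d(s,t)^{-2}\bigr)\ds\dt$. This limit is at most $\mathcal{E}(\gamma)$ and, the integrand being nonnegative, increases to $\mathcal{E}(\gamma)$ as $d_0\downarrow0$; together with the nonnegativity of the near parts it already yields $\liminf_{n\to\infty}\mathcal{E}_n(p_n)\ge\mathcal{E}(\gamma)$. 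The entire remaining task is therefore the matching upper bound, i.e.\ to show that the near part of $\mathcal{E}_n(p_n)$ is small uniformly in $n$ once $d_0$ is small.

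For the near part of $\mathcal{E}$ this is immediate, because $\iint_{d(s,t)<d_0}\bigl(\abs{\gamma(s)-\gamma(t)}^{-2}-d(s,t)^{-2}\bigr)\ds\dt\to0$ as $d_0\downarrow0$ by integrability. For the near part of $\mathcal{E}_n(p_n)$ the plan is to reduce to the smooth case: approximate $\gamma$ by a simple smooth curve $\gamma_\delta$ (for instance a mollification) with $\norm{\gamma_\delta-\gamma}_{W^{3/2,2}}\to0$, hence $\mathcal{E}(\gamma_\delta)\to\mathcal{E}(\gamma)$ --- this smooth energy recovery is exactly what underlies the recovery inequality in Theorem \ref{maintheoremgammamoebius}. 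Let $p_{n,\delta}$ be the polygon inscribed in $\gamma_\delta$ at the \emph{same} parameters as $p_n$; using that $\gamma$ and $\gamma_\delta$ are bi-Lipschitz with comparable constants, for $n$ large the relevant subdivision still satisfies the two-sided bound required in Proposition \ref{orderofconvergence}, so $\mathcal{E}_n(p_{n,\delta})\to\mathcal{E}(\gamma_\delta)$, and subtracting the far part shows that the near part of $\mathcal{E}_n(p_{n,\delta})$ converges to $\iint_{d(s,t)<d_0}\bigl(\abs{\gamma_\delta(s)-\gamma_\delta(t)}^{-2}-d(s,t)^{-2}\bigr)\ds\dt$, which in turn tends to the (small) near part of $\mathcal{E}(\gamma)$ as $\delta\downarrow0$. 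What remains is to compare the near parts of $\mathcal{E}_n(p_n)$ and $\mathcal{E}_n(p_{n,\delta})$, and this is the step I expect to be the main obstacle: the two polygons are close only in $C^{0}$, whereas near the diagonal the discrete energy behaves like a discrete $W^{3/2,2}$-seminorm of the polygon and is insensitive to perturbations only if they are controlled in that stronger norm. Making this precise amounts to a discrete fractional-Sobolev estimate --- a polygonal analogue of the characterisation of finite-energy curves --- bounding the near-diagonal part of $\mathcal{E}_n$ of an inscribed polygon at scale $d_0$, uniformly in the number of segments, by a quantity that vanishes with $d_0$ because $\gamma\in W^{3/2,2}$; this simultaneously delivers the desired comparison. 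Granting it, one assembles the three estimates and lets first $n\to\infty$, then $\delta\downarrow0$, then $d_0\downarrow0$ to obtain $\limsup_{n\to\infty}\mathcal{E}_n(p_n)\le\mathcal{E}(\gamma)$, which together with the lower bound from the far part proves the claim.
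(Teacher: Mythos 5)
Your decomposition into a far part and a near part, the Riemann‑sum convergence of the far parts, and the resulting lower bound $\liminf_{n}\mathcal{E}_{n}(p_{n})\ge\mathcal{E}(\gamma)$ are all sound (the lower bound is also immediate from the $\liminf$‑inequality in Theorem \ref{maintheoremgammamoebius}, since $p_{n}\to\gamma$). The difficulty is that the entire content of the corollary sits in the one step you defer: the smallness, \emph{uniformly in $n$}, of the near‑diagonal part of $\mathcal{E}_{n}(p_{n})$ once $d_{0}$ is small. You name this step, call it the main obstacle, and then write ``Granting it, one assembles the three estimates'' --- so the proof is not complete. Worse, the detour you propose does not lead to that estimate: comparing $p_{n}$ with a polygon $p_{n,\delta}$ inscribed in a mollification $\gamma_{\delta}$ only gives $C^{0}$‑closeness, which, as you yourself observe, says nothing about the near‑diagonal sum; and quantifying the comparison in a discrete $W^{3/2,2}$‑type seminorm is essentially the very estimate you are trying to establish, so the reduction is circular. (There are also smaller frictions in the detour: mollifying destroys the arc‑length parametrisation and the length normalisation, so $\gamma_{\delta}$ must be reparametrised and rescaled before Proposition \ref{orderofconvergence} applies, and simplicity and the knot class must be preserved.)

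The missing estimate should be proved directly, with no smoothing, and this is in essence how the cited proof proceeds. First, the terms with $\abs{i-j}=1$ vanish identically, because for an inscribed polygon the intrinsic distance between consecutive vertices equals the chord length. For $\abs{i-j}\ge 2$ write the summand as $\bigl(d(a_{i},a_{j})-\abs{\gamma(b_{i})-\gamma(b_{j})}\bigr)\bigl(d(a_{i},a_{j})+\abs{\gamma(b_{i})-\gamma(b_{j})}\bigr)\abs{\gamma(b_{i})-\gamma(b_{j})}^{-2}d(a_{i},a_{j})^{-2}$ and use two facts: (a) the ``wasted length'' $w(s,t):=d_{\sphere_{1}}(s,t)-\abs{\gamma(s)-\gamma(t)}$ is monotone under enlarging the parameter arc (triangle inequality), so that $d(a_{i},a_{j})-\abs{\gamma(b_{i})-\gamma(b_{j})}\le w(b_{i},b_{j})\le w(s,t)$ for every $(s,t)\in[b_{i-1},b_{i}]\times[b_{j},b_{j+1}]$; and (b) a curve with $\mathcal{E}(\gamma)<\infty$ is bi‑Lipschitz, $\abs{\gamma(s)-\gamma(t)}\ge c_{0}\,d_{\sphere_{1}}(s,t)$ with $c_{0}$ controlled by the energy, which together with the hypothesis $c/n\le\abs{\gamma(b_{k+1})-\gamma(b_{k})}\le\overline c/n$ makes all the lengths $d(a_{i},a_{j})$, $d_{\sphere_{1}}(b_{i},b_{j})$, $d_{\sphere_{1}}(s,t)$ and $\abs{\gamma(s)-\gamma(t)}$ pairwise comparable on that cell. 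Writing the smooth integrand in the same factored form, one obtains a constant $C=C(c_{0},c,\overline c)$ with
\begin{align*}
	\Bigl(\tfrac{1}{\abs{\gamma(b_{i})-\gamma(b_{j})}^{2}}-\tfrac{1}{d(a_{i},a_{j})^{2}}\Bigr)\,d(a_{i+1},a_{i})\,d(a_{j+1},a_{j})
	\le C\int_{b_{i-1}}^{b_{i}}\!\!\int_{b_{j}}^{b_{j+1}}\Bigl(\tfrac{1}{\abs{\gamma(s)-\gamma(t)}^{2}}-\tfrac{1}{d_{\sphere_{1}}(s,t)^{2}}\Bigr)\ds\dt .
\end{align*}
Summing over the near‑diagonal pairs bounds the near part of $\mathcal{E}_{n}(p_{n})$ by $C$ times the near part of $\mathcal{E}(\gamma)$ at scale comparable to $d_{0}$, which vanishes as $d_{0}\downarrow 0$ by integrability --- exactly the uniform bound you need. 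With this lemma in place your assembly of the three estimates goes through; without it, the argument does not close.
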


In contrast to the situation for the minimum distance energy, the overall minimizers of the discrete M\"obius energy are known:

\begin{lemma}[Regular $n$-gon is unique minimizer of discrete M\"obius energy, \cite{Scholtes2014e}]\label{discreteminimizermoebius}
	The unique minimizer of $\mathcal{E}_{n}$ in $\mathcal{P}_{n}$ is the regular $n$-gon.
\end{lemma}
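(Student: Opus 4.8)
The plan is to show that among all equilateral $n$-gons (of a fixed edge length, say $1/n$, since $\mathcal{E}_n$ is scale invariant) the regular $n$-gon is the unique minimizer by exploiting the fact that in the formula \eqref{discreteMoebiusenergy} the factors $d(a_{i+1},a_i)d(a_{j+1},a_j)$ are constant (equal to $(1/n)^2$) on $\mathcal{P}_n$, so that minimizing $\mathcal{E}_n$ reduces to minimizing $\sum_{i\neq j}|p(a_j)-p(a_i)|^{-2}$ over equilateral $n$-gons, the ``discrete metric'' term $\sum_{i\neq j}d(a_j,a_i)^{-2}$ being a constant independent of $p$. First I would record this reduction explicitly and note that since each summand $|p(a_j)-p(a_i)|^{-2}$ is nonincreasing in the chord length $|p(a_j)-p(a_i)|$, it suffices to prove that the regular $n$-gon simultaneously maximizes every chord length $|v_i-v_j|$ among equilateral $n$-gons with unit edges — more precisely, that it is the unique configuration making the vector of all pairwise distances ``maximal'' in the appropriate sense.

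Next I would make this precise using the standard fact that, for a closed equilateral polygon inscribed in the combinatorial cycle, the chord between vertices $k$ edges apart has length at most that of the regular polygon, with equality forcing the intervening edges to be arranged as on the regular $n$-gon. The cleanest route is: parametrize the polygon by its $n$ unit edge vectors $e_1,\dots,e_n\in\R^d$ with $\sum_i e_i=0$; then $|v_i-v_j|=\bigl|\sum_{k} \pm e_k\bigr|$ over the shorter arc, and by the triangle inequality $|v_i-v_j|\le (\text{number of edges on the shorter arc})$, while for the regular $n$-gon one computes this chord equals $\frac{\sin(\pi m/n)}{\sin(\pi/n)}$ for an $m$-edge arc — and one shows this value is actually what an equilateral closed $n$-gon achieves as its maximum. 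The subtle point is that the chords cannot all be simultaneously stretched to their unconstrained maxima (collinear edges) because of the closure constraint $\sum e_i=0$; so the argument must be a genuine constrained optimization. I expect this to be the main obstacle: turning ``each chord is maximized by the regular polygon'' into a rigorous joint statement, and in particular handling the equality/uniqueness case, where one must rule out all non-regular equilateral $n$-gons.

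To overcome this I would use a symmetrization/averaging argument or a convexity argument rather than treating chords one at a time. One option: apply the rearrangement-type inequality of Lhuilier/Kubota for equilateral polygons, or directly invoke the result — implicit in the literature on regular polygons extremizing symmetric functionals of pairwise distances — that the function $p\mapsto \sum_{i\neq j}f(|v_i-v_j|)$ is, for convex nonincreasing $f$ of the squared distance, minimized on $\mathcal{P}_n$ by the regular $n$-gon. Since $t\mapsto t^{-1}$ is convex on $(0,\infty)$, $f(r)=1/r^2$ fits. A self-contained alternative is to use a Steiner-type symmetrization: given a non-regular equilateral $n$-gon, find a pair of edges that can be swapped or a reflection of a sub-arc that strictly increases some chord while keeping all others nondecreasing and preserving equilaterality and closure, hence strictly decreasing $\mathcal{E}_n$; iterating (and passing to a limit) drives the configuration to the regular $n$-gon, and the strictness gives uniqueness. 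Finally I would check that the regular $n$-gon is indeed admissible and that the infimum is attained there, so that the computed value $\mathcal{E}_n(g_n)$ is the minimum; uniqueness then follows from the strict monotonicity of $r\mapsto r^{-2}$ together with the equality analysis, since two equilateral $n$-gons with identical full lists of pairwise distances are congruent.
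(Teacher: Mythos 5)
Your opening reduction is correct and is exactly how the argument starts: on $\mathcal{P}_{n}$ the weights $d(a_{i+1},a_{i})d(a_{j+1},a_{j})=n^{-2}$ and the intrinsic term $\sum_{i\neq j}d(a_{j},a_{i})^{-2}$ are the same for every competitor, so minimizing $\mathcal{E}_{n}$ amounts to minimizing $\sum_{i\neq j}|x_{i}-x_{j}|^{-2}$ over closed equilateral $n$-gons. But the step you then rest everything on --- that the regular $n$-gon \emph{simultaneously maximizes every chord} $|x_{i}-x_{j}|$ among closed equilateral $n$-gons, so that monotonicity of $r\mapsto r^{-2}$ finishes the proof --- is false. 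Take $n=4$ with unit edge vectors $e_{1},e_{2},-e_{1},-e_{2}$: one diagonal $|e_{1}+e_{2}|$ can be made arbitrarily close to $2>\sqrt{2}$, strictly exceeding the square's diagonal (at the cost of the other diagonal shrinking, since $|e_{1}+e_{2}|^{2}+|e_{1}-e_{2}|^{2}=4$ is fixed). So there is no ``standard fact'' of the kind you invoke, and the chord-by-chord comparison cannot work. Your fallback options do not repair this: the ``result implicit in the literature'' that $\sum_{i\neq j}f(|x_{i}-x_{j}|)$ is minimized by the regular $n$-gon for convex decreasing $f$ is essentially the statement to be proved (and the classical results of that type concern points constrained to a circle or sphere, not closed equilateral polygons), while the proposed symmetrization that increases one chord without decreasing any other is exactly what the rhombus example shows cannot exist.

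The argument that actually works (and is the one in the cited source) replaces the pointwise chord comparison by an averaged one. Group the chords by combinatorial separation $m$ and first use convexity of $t\mapsto t^{-1}$ (AM--HM), which gives
\begin{align*}
	\sum_{i=1}^{n}\frac{1}{|x_{i+m}-x_{i}|^{2}}\;\geq\;\frac{n^{2}}{\sum_{i=1}^{n}|x_{i+m}-x_{i}|^{2}},
\end{align*}
with equality iff all chords of separation $m$ have equal length; then bound the denominator by the discrete Wirtinger (Fan--Taussky--Todd/Schoenberg) inequality
\begin{align*}
	\sum_{i=1}^{n}|x_{i+m}-x_{i}|^{2}\;\leq\;\frac{\sin^{2}(m\pi/n)}{\sin^{2}(\pi/n)}\sum_{i=1}^{n}|x_{i+1}-x_{i}|^{2},
\end{align*}
whose equality case (vertices lying on the first Fourier harmonic, i.e.\ an affinely regular polygon) combined with equilaterality and the equal-chord condition forces the regular $n$-gon. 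Summing over $m$ and comparing with the value attained by $g_{n}$ yields the lemma together with uniqueness. Your instinct to use convexity was the right one, but it must be applied to the family of chords of fixed separation via their quadratic mean, not chordwise; without the Wirtinger-type inequality controlling $\sum_{i}|x_{i+m}-x_{i}|^{2}$ the proof has a genuine gap.
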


An immediate consequence is the convergence of overall discrete minimizers to the round circle:

\begin{corollary}[Convergence of discrete minimizers to the round circle, \cite{Scholtes2014e}]\label{convergenceofdiscreteminimizerstoroundcircle}
	Let $p_{n}\in\mathcal{P}_{n}$ bounded in $L^{\infty}$ with $\mathcal{E}_{n}(p_{n})=\inf_{\mathcal{P}_{n}}\mathcal{E}_{n}$.
	Then there is a subsequence with $p_{n_{k}}\to \gamma$ in $W^{1,\infty}(\sphere_{1},\R^{d})$, where $\gamma$ is a round unit circle.
\end{corollary}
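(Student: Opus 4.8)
The plan is to combine Lemma~\ref{discreteminimizermoebius} with the $\Gamma$-convergence result of Theorem~\ref{maintheoremgammamoebius} and the standard compactness properties of $\Gamma$-convergence (Theorem~\ref{dalmasoproposition} in the appendix). By Lemma~\ref{discreteminimizermoebius}, $p_n$ is the regular $n$-gon inscribed in some circle; since $(p_n)$ is bounded in $L^\infty$, I would first extract a subsequence $p_{n_k}$ converging in $C^0$ (hence in $L^1$), using equicontinuity from the uniform Lipschitz bound that all curves in $\mathcal{C}_{1,\mathrm{p}}$ share (they are parametrised by arc length and have length $1$). The limit $\gamma$ is then a closed arc length curve of length $1$.

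The next step is to identify $\gamma$ as a round circle via the $\Gamma$-convergence machinery. Taking $\mathcal{K}$ to be the unknot class, Theorem~\ref{maintheoremgammamoebius} gives $\mathcal{E}_n \xrightarrow{\Gamma}\mathcal{E}$ on $\mathcal{C}_{1,\mathrm p}$ with respect to the $L^1$-norm (and, once we have the stronger convergence, with respect to $W^{1,\infty}$). The equi-coercivity needed to invoke the convergence-of-minimizers statement follows because $\inf_{\mathcal{P}_n}\mathcal{E}_n = \mathcal{E}_n(g_n)$ is a bounded sequence: one checks directly, or cites that the regular $n$-gon energies converge to $\mathcal{E}(\text{circle})$, so the minimal values stay bounded and a subsequence of minimizers stays in a sublevel set from which compactness in $L^1$ is available. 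Then the standard fundamental theorem of $\Gamma$-convergence yields that the $L^1$-limit $\gamma$ minimizes $\mathcal{E}$ in $\mathcal{C}(\mathcal{K})$ and $\mathcal{E}_n(p_n)\to\mathcal{E}(\gamma)$. Since the round unit circle is the unique absolute minimizer of $\mathcal{E}$ (mentioned in the section introduction, from \cite{Freedman1994a}), and the unknot class contains it, $\gamma$ is a round unit circle.

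Finally I would upgrade the convergence from $C^0$ to $W^{1,\infty}$. Here the explicit structure from Lemma~\ref{discreteminimizermoebius} is decisive: each $p_{n_k}$ is \emph{literally} a regular $n_k$-gon, so it is an explicit rescaling/rotation/translation of the standard regular polygon, and its derivative is piecewise constant with values on the unit sphere determined by the vertices. Once the vertices converge (which follows from $C^0$ convergence together with the equilateral constraint pinning down vertex spacing $1/n_k$), the edge directions converge uniformly to the tangent of the limiting circle, giving $p_{n_k}\to\gamma$ in $W^{1,\infty}$.

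The main obstacle is the equi-coercivity/compactness input: one must ensure that the sequence of minimizers does not ``escape'' in a way incompatible with extracting an $L^1$-convergent subsequence whose limit still lies in $\mathcal{C}(\mathcal{K})$ and is parametrised by arc length — in general $C^0$-limits of arc length polygons need not be arc length parametrised or preserve the knot class (as the text warns after Corollary~\ref{corollaryconvergenceminmizers}). Here this is harmless because the explicit regular-$n$-gon form forces arc length parametrisation in the limit and forces the limit to be a circle (unknotted); but spelling out that the $L^\infty$-bound plus equilateralness gives genuine $C^0$-convergence of a subsequence, rather than merely weak-$*$ convergence, is the step that needs care.
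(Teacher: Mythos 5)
Your argument is correct, but it routes through machinery the paper does not need. The survey presents this corollary as an \emph{immediate} consequence of Lemma~\ref{discreteminimizermoebius}: once you know each $p_n$ is the regular $n$-gon (of perimeter $1$, hence determined up to a rigid motion and a choice of starting point/orientation), the $L^\infty$-bound confines the translations to a compact set, the rotational parts already live in a compact group, so a subsequence of rigid motions converges; and the arc-length parametrisation of the regular $n$-gon differs from that of the limiting circle of length $1$ by $O(1/n)$ in $C^0$ and by $O(1/n)$ in the (piecewise-constant) tangent, giving $W^{1,\infty}$-convergence to a round circle directly. That elementary computation is exactly your final paragraph -- and it is the entire proof. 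The $\Gamma$-convergence detour (Theorem~\ref{maintheoremgammamoebius}, Theorem~\ref{dalmasoproposition}, uniqueness of the absolute minimizer from \cite{Freedman1994a}) is used by you only to identify the limit as a circle, but that identification is already forced by the explicit geometry; moreover, that detour by itself could never yield more than $L^1$- or $C^0$-convergence, so you would have had to fall back on the explicit structure anyway. The price of your route is that you must then address equi-coercivity, preservation of arc-length parametrisation, and preservation of the knot class in the limit -- precisely the difficulties the text flags after Corollary~\ref{corollaryconvergenceminmizers} -- all of which evaporate in the direct argument because regular $n$-gons converging to a circle trivially satisfy them. The $\Gamma$-convergence approach is the right tool for Corollary~\ref{corollaryconvergenceminmizers}, where minimizers are not explicitly known; here Lemma~\ref{discreteminimizermoebius} makes it superfluous.
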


One of the main differences between the discrete M\"obius energy \eqref{discreteMoebiusenergy} and the minimum distance energy \eqref{MDenergy} is that bounded minimum distance energy avoids double point singularities, 
while for \eqref{discreteMoebiusenergy} this is only true in the limit. This avoidance of singularities permits to prove the existence of minimizers of the minimum distance energy
\eqref{MDenergy} via the direct method. This might be harder or even impossible to achieve for the energy \eqref{discreteMoebiusenergy}. Nevertheless, the relation between the discrete 
M\"obius energy \eqref{discreteMoebiusenergy} and the smooth M\"obius energy is more clearly visible than for the minimum distance energy \eqref{MDenergy}, as reflected in Theorem \ref{maintheoremgammamoebius} and 
Corollaries \ref{corollaryconvergenceminmizers}-\ref{convergenceofdiscreteminimizerstoroundcircle}.

\section{Integral Menger Curvature}

The \emph{integral Menger curvature} was first considered by Mel'nikov (see \cite{Melnikov1995b}) as a concept of curvature of a measure that is naturally connected with the Cauchy transform
of this measure. For closed arc length curves $\gamma$ of length $L$, the integral Menger curvature is given by
\begin{align*}
	\mathcal{M}_{s}(\gamma)&\vcentcolon=\int_{\sphere_{L}}\int_{\sphere_{L}}\int_{\sphere_{L}}\kappa^{s}\bigl(\gamma(t),\gamma(u),\gamma(v)\bigr)\,\dd t\,\dd u\,\dd v,
\end{align*}
where $s\in (0,\infty)$ and $\kappa(x,y,z)$ is the inverse of the circumradius $r(x,y,z)$ of the three points $x,y$ and $z$.
The integral Menger curvature for $s=2$ played an important role in the solution of the Painlev\'e problem, i.e. to find geometric characterisations of removable sets for 
bounded analytic functions, see \cite{Pajot2002a,Dudziak2010a,Tolsa2014a} for a detailed presentation and references.
There is a remarkable theorem, which states that one\--di\-men\-sio\-nal Borel sets in $\R^{d}$ with finite integral Menger curvature $\mathcal{M}_{2}$ are $1$-rectifiable (see \cite{Leger1999a}). 
These results for $\M_{2}$ were later extended to sets of fractional dimension and metric spaces (see \cite{Lin2001a,Hahlomaa2008a}).
As a consequence, this theorem also ensures that an $\HM^{1}$ measurable set $E\subset \R^{d}$ with $\mathcal{M}_{2}(E)<\infty$ has approximate $1$-tangents at $\HM^{1}$ a.e point.
\bigskip

Complementary to this research, where highly irregular sets are permitted, is the investigation of rectifiable curves with finite $\M_{s}$ energy.
These curves have a classic tangent $\HM^{1}$ a.e. to begin with.
It turns out that for $s>3$ this guarantees that the curve is simple and that the arc length parametrisation is of class $\cramped{C^{1,1-3/p}}$,
which can be interpreted as a geometric Morrey-Sobolev imbedding (see \cite{Strzelecki2010a}).
It could be shown that the space of curves with finite $\mathcal{M}_{s}$ for $s>3$ is that of Sobolev-Slobodeckij 
embeddings of class $\cramped{W^{2-2/s,s}}$ (see \cite{Blatt2013a}) and that polygons have finite integral Menger curvature $\mathcal{M}_{s}$ exactly for $s\in (0, 3)$ (see \cite{Scholtes2011e}).
Furthermore, results regarding optimal H\"older regularity could be obtained (see \cite{Kolasinski2013a}).
Related energies have been investigated with regard to their regularizing properties (see \cite{Strzelecki2007a,Strzelecki2009a,Blatt2015a}).
\bigskip

The movement of quadrilaterals according to their Menger curvature was investigated (see \cite{Jecko2002a}) and
computer experiments for the gradient flow of integral Menger curvature were carried out (see \cite{Hermes2012a}). 
Besides the ad hoc method used there, the only theoretical results
regarding discrete versions of the integral Menger curvature that we are aware of can be found in the thesis of the author (see \cite{Scholtes2014a}). 
These results are still subject to ongoing research and are soon to be extended in an article of the author (see \cite{Scholtes2016b}).
There, the \emph{discrete integral Menger curvature} of a polygon $p$ with consecutive vertices $p(a_{i})=x_{i}$ is defined by
\begin{align*}
	\mathcal{M}_{s,n}(p)\vcentcolon=\sum_{\substack{i,j,k=1\\\#\{i,j,k\}=3}}^{n}\kappa^{s}(x_{i},x_{j},x_{k})\prod_{l\in\{i,j,k\}}\frac{\abs{x_{l}-x_{l-1}}+\abs{x_{l+1}-x_{l}}}{2}
\end{align*}
if $x_{i}\not=x_{j}$ for $i\not=j$ and $\mathcal{M}_{s,n}(p)=\infty$ else.
It is easily seen, that this energy is continuous on the space of nonsingular polygons with $n$ segments.
As for the M\"obius energy, there is a $\Gamma$ convergence result:

\begin{theorem}[Integral Menger curvature is $\Gamma$-limit of discrete energies, \cite{Scholtes2014a}]\label{maintheoremgammamenger}
	For $q\in [1,\infty]$, $\norm{\cdot}\in\{\norm{\cdot}_{L^{q}(\sphere_{1},\R^{d})},\norm{\cdot}_{W^{1,q}(\sphere_{1},\R^{d})}\}$ and every tame knot class $\mathcal{K}$ holds 
	\begin{align*}
		\mathcal{M}_{s,n}\xrightarrow{\Gamma}\mathcal{M}_{s}\quad\text{on }(\mathcal{C}_{2,\mathrm{p}}(\mathcal{K}),\norm{\cdot}).
	\end{align*}
\end{theorem}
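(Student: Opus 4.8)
The plan is to argue exactly as for Theorem~\ref{maintheoremgammamoebius}: prove the $\liminf$ inequality for the coarsest of the admissible topologies and produce a recovery sequence that converges in the finest. On $\mathcal{C}_{2,\mathrm{p}}(\mathcal{K})$, a set of unit speed curves of length $1$, all the $L^{q}$ topologies coincide and so do the $W^{1,q}$ topologies for $q<\infty$; since the $\liminf$ inequality only becomes easier and the recovery property only more demanding as the topology is refined, it suffices to prove the $\liminf$ inequality in $L^{1}$ and to find a recovery sequence converging in $W^{1,\infty}$. Throughout, $\mathcal{M}_{s,n}$ is regarded as a functional on all of $\mathcal{C}_{2,\mathrm{p}}(\mathcal{K})$ with the value $+\infty$ off $\mathcal{P}_{n}$, and two facts are used repeatedly: a curve in $\mathcal{C}(\mathcal{K})$ is an embedding, hence injective on $\sphere_{1}$; and if it is, in addition, of class $C^{2}$ or a polygon, then it has positive thickness $\Delta$, which equals the infimal circumradius $r(\gamma(t),\gamma(u),\gamma(v))$, so that $\kappa\le\Delta^{-1}$ on all triples of its points and, in particular, $\kappa(x_{i},x_{j},x_{k})\le\Delta^{-1}$ for any three vertices of a polygon inscribed in it.

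For the $\liminf$ inequality let $p_{n}\to\gamma$ in $L^{1}(\sphere_{1},\R^{d})$ with all $p_{n}\in\mathcal{C}_{2,\mathrm{p}}(\mathcal{K})$ and $\liminf_{n}\mathcal{M}_{s,n}(p_{n})<\infty$. Passing to a subsequence the energies are bounded, hence each $p_{n}\in\mathcal{P}_{n}$ with pairwise distinct vertices $x^{n}_{i}=p_{n}(i/n)$, and, being unit speed and bounded in $L^{1}$, the $p_{n}$ converge to $\gamma$ uniformly. On an equilateral $n$-gon of length $1$ every weight $\tfrac12(\abs{x_{l}-x_{l-1}}+\abs{x_{l+1}-x_{l}})$ equals $1/n$, so $\mathcal{M}_{s,n}(p_{n})=\int_{[0,1]^{3}}f_{n}$, where $f_{n}$ is the step function taking the value $\kappa^{s}(x^{n}_{i},x^{n}_{j},x^{n}_{k})$ on $[\tfrac in,\tfrac{i+1}n)\times[\tfrac jn,\tfrac{j+1}n)\times[\tfrac kn,\tfrac{k+1}n)$ for pairwise distinct $i,j,k$ and vanishing on the remaining cubes, whose total measure is $O(1/n)$. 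For almost every $(t,u,v)$ the numbers $t,u,v$ are pairwise distinct modulo $1$; then for large $n$ the rounded parameters are distinct, the associated vertices converge to $\gamma(t),\gamma(u),\gamma(v)$, which are three distinct points by injectivity, and, $\kappa$ being continuous off the coincidence set, $f_{n}\to\kappa^{s}(\gamma(t),\gamma(u),\gamma(v))$ almost everywhere. Fatou's lemma now gives $\mathcal{M}_{s}(\gamma)=\int_{[0,1]^{3}}\kappa^{s}(\gamma(t),\gamma(u),\gamma(v))\,\dd t\,\dd u\,\dd v\le\liminf_{n}\mathcal{M}_{s,n}(p_{n})$.

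For the recovery sequence, fix $\gamma\in\mathcal{C}_{2,\mathrm{p}}(\mathcal{K})$. If $\gamma$ is a polygon with $\mathcal{M}_{s}(\gamma)=\infty$, in particular whenever $s\ge3$, the constant sequence $p_{n}=\gamma$ works; if $\gamma$ is a polygon with $s<3$, one refines $\gamma$ (exactly, when $n$ is a multiple of its number of edges, and by a small equilateral adjustment otherwise) and verifies $\mathcal{M}_{s,n}(p_{n})\to\mathcal{M}_{s}(\gamma)$ by an order-of-convergence estimate. In the main case $\gamma\in C^{2}$ one takes $p_{n}\in\mathcal{P}_{n}$ to be equilateral polygons inscribed in $\gamma$: for large $n$ they exist by the usual device of choosing $n$ parameters on $\sphere_{1}$ with successive chords of equal length and rescaling by a factor that tends to $1$; they belong to the knot class $\mathcal{K}$ once the mesh is small compared to $\Delta$; and $p_{n}\to\gamma$ in $W^{1,\infty}$ because on a $C^{2}$ curve the chord directions converge uniformly to the tangent. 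For such $p_{n}$ one has $\mathcal{M}_{s,n}(p_{n})=\int_{[0,1]^{3}}f_{n}$ as above, $f_{n}$ converges almost everywhere to $\kappa^{s}(\gamma(\cdot),\gamma(\cdot),\gamma(\cdot))$ by the argument of the previous paragraph, and $0\le f_{n}\le(2\Delta^{-1})^{s}$ uniformly in $n$ by the thickness bound, so the dominated convergence theorem yields $\mathcal{M}_{s,n}(p_{n})\to\mathcal{M}_{s}(\gamma)$, which is stronger than the required $\limsup$ inequality.

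The $\liminf$ direction should cost essentially nothing. The real work, and where I expect the genuine difficulty, is the energy convergence in the recovery step: the pointwise limit of $f_{n}$ fails precisely on the coincidence set, so everything turns on a uniform-in-$n$ domination near the diagonal. For $C^{2}$ curves this is supplied cleanly by $\kappa\le\Delta^{-1}$, a consequence of positive thickness, and the argument is then routine. The delicate case is a polygonal limit with $s<3$: there the mass concentrates near the corners, where $\kappa^{s}$ is only conditionally integrable, so one has to match the near-corner part of the discrete sum with the near-corner part of the singular integral quantitatively (and, for the $W^{1,\infty}$-norm, restrict to the subsequence of edge numbers along which $\gamma$ can be represented); this is the analogue for $\mathcal{M}_{s}$ of Proposition~\ref{orderofconvergence} for the M\"obius energy.
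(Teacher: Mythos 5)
Your overall architecture --- extending $\mathcal{M}_{s,n}$ by $+\infty$ off $\mathcal{P}_{n}$, reducing to the $\liminf$ inequality in $L^{1}$ and a recovery sequence in $W^{1,\infty}$, proving the $\liminf$ inequality by writing $\mathcal{M}_{s,n}(p_{n})$ as the integral of a nonnegative step function converging a.e.\ and applying Fatou, and recovering $C^{2}$ curves by inscribed equilateral polygons with dominated convergence under the thickness bound $\kappa\leq \Delta[\gamma]^{-1}$ --- is the natural one and matches the route the paper indicates (the paper itself only cites \cite{Scholtes2014a} and argues the analogous M\"obius and thickness statements in exactly this way). The $\liminf$ part and the $C^{2}$ recovery are essentially complete as written.

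The genuine gap is the recovery sequence for a polygonal limit $\gamma\in\mathcal{P}_{m}(\mathcal{K})$ when $s\in(0,3)$, which is precisely the case $\mathcal{M}_{s}(\gamma)<\infty$ (for $s\geq 3$ the case is vacuous by \cite{Scholtes2011e}, as you note), and it is twofold. First, the quantitative step you defer is the actual content of this case: near a corner the integrand is of size comparable to the inverse distance to the corner, $\int\kappa^{s}$ converges only because $s<3$, and the refined polygon samples $\kappa^{s}$ where it is of order $n^{s}$; one must show that the near-corner part of the Riemann sum and of the improper integral are both $O(\epsilon^{3-s})$ uniformly in $n$ before the a.e.\ convergence argument can be localized away from the corners. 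This is doable but is not supplied by citing an ``order-of-convergence estimate''. Second, and more seriously, your parenthetical fix for the $W^{1,\infty}$ topology does not work. The derivative of an equilateral $p_{n}\in\mathcal{P}_{n}$ is constant on consecutive intervals of length exactly $1/n$; if a genuine corner parameter $t_{0}$ of $\gamma$ (with one-sided tangents $u\neq w$) is not a breakpoint of $p_{n}$, then $p_{n}'$ takes a single value on an interval containing $t_{0}$ in its interior, whence $\norm{p_{n}'-\gamma'}_{L^{\infty}}\geq \abs{u-w}/2$. If all $m$ vertices of $\gamma$ are genuine corners this forces $m\mid n$, so for the omitted indices \emph{every} competitor has $\mathcal{M}_{s,n}=+\infty$ and the $\limsup$ inequality, which in the convention of the appendix requires a sequence $p_{n}\to\gamma$ defined for every $n$, cannot be verified by ``restricting to the subsequence of edge numbers along which $\gamma$ can be represented''. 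You must either treat this case by a genuinely different construction or flag it as a restriction on the statement; for $q<\infty$ the approximation is unproblematic (the set where the derivatives disagree has measure $O(m/n)$), but there you still owe the corner estimate above.
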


Additionally, it could be shown that the energies of inscribed polygons converge to the energy of the curve if the number of vertices increases:

\begin{corollary}[Convergence of Menger curvatures of inscribed polygons, \cite{Scholtes2014a}]\label{energyconvergenceinscribedpolygonsmenger}
	Let $s\in (0,\infty)$, $\gamma\in \mathcal{C}\cap C^{2}$ embedded and $p_{n}$ be inscribed equilateral polygons with $n$ segments. 
	Then $\lim_{n\to\infty}\mathcal{M}_{s,n}(p_{n})=\mathcal{M}_{s}(\gamma)$.
\end{corollary}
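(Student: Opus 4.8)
\textbf{Proof proposal for Corollary \ref{energyconvergenceinscribedpolygonsmenger}.}

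The plan is to deduce the convergence $\mathcal{M}_{s,n}(p_n)\to\mathcal{M}_s(\gamma)$ directly from the pointwise convergence of the integrands together with a dominating function, i.e. via dominated convergence, rather than through the $\Gamma$-convergence machinery of Theorem \ref{maintheoremgammamenger}. Write $p_n$ as the equilateral polygon with vertices $x_i^n=\gamma(a_i^n)$ on a subdivision of $\sphere_L$ with $\abs{x_{i+1}^n-x_i^n}$ all equal; since $\gamma\in C^2$ is embedded, these chord lengths are comparable to $1/n$ uniformly, and the corresponding arc lengths $a_{i+1}^n-a_i^n$ are as well. First I would rewrite $\mathcal{M}_{s,n}(p_n)$ as an integral: define step functions $\gamma_n$ and weights so that $\mathcal{M}_{s,n}(p_n)=\int\int\int \kappa^s(\gamma_n(t),\gamma_n(u),\gamma_n(v))\,w_n(t)w_n(u)w_n(v)\,\dd t\,\dd u\,\dd v$ over the set where the three sampled indices are distinct, with $\gamma_n(t)=x_i^n$ on the $i$-th arc and $w_n$ a density (comparable to $1$) encoding the factor $\tfrac{\abs{x_l-x_{l-1}}+\abs{x_{l+1}-x_l}}{2}$ divided by the arc length. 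On the diagonal-type region where two of the indices coincide, the contribution must be shown to vanish as $n\to\infty$, which is exactly the place where the restriction $\#\{i,j,k\}=3$ and the finiteness $\mathcal{M}_{s,n}(p_n)<\infty$ for polygons (valid since $s<\infty$ but the relevant bound for inscribed polygons holds for all $s$ because $\gamma$ is $C^2$) come into play.

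The key steps, in order, are: (1) establish the uniform comparability $c/n\le \abs{x_{i+1}^n-x_i^n}\le \overline c/n$ and the analogous bounds for arc lengths, using $\gamma\in C^2$ embedded (so it has positive thickness / reach); (2) prove pointwise convergence $\kappa(\gamma_n(t),\gamma_n(u),\gamma_n(v))\to\kappa(\gamma(t),\gamma(u),\gamma(v))$ for a.e.\ triple $(t,u,v)$ with $t,u,v$ pairwise distinct, which follows from $\gamma_n\to\gamma$ uniformly and continuity of the circumradius away from collinear/coincident configurations — here one uses that for distinct $t,u,v$ the points $\gamma(t),\gamma(u),\gamma(v)$ are distinct and, generically, non-collinear, and that $\kappa$ extends continuously by $0$ at collinear non-coincident triples; (3) produce an integrable dominating function for $\kappa^s(\gamma_n(t),\gamma_n(u),\gamma_n(v))w_n(t)w_n(u)w_n(v)$ independent of $n$; (4) apply dominated convergence on the off-diagonal region and a separate estimate to kill the near-diagonal region, then combine. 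Step (3) is the main obstacle: one needs an $n$-independent bound of the form $\kappa(x_i^n,x_j^n,x_k^n)\lesssim \kappa(\gamma(t),\gamma(u),\gamma(v)) + (\text{integrable error})$, or more robustly a bound using the $C^{1,1}$-type geometry of $\gamma$; the standard tool is that for points on a curve of bounded turning, $\kappa(x,y,z)\lesssim \tfrac{1}{\dist}$ between the closest pair up to a factor controlled by the angle, and one must show the resulting majorant lies in $L^1(\sphere_L^3)$ — which is precisely equivalent to $\mathcal{M}_s(\gamma)<\infty$, guaranteed because $\gamma\in C^2$ so the arc length parametrisation is in $W^{2-2/s,s}$ for every finite $s$ (cf.\ the discussion preceding the corollary).

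I expect the domination step to require care near the diagonal: when, say, $t$ and $u$ are in the same or adjacent arcs, $\kappa(\gamma_n(t),\gamma_n(u),\gamma_n(v))$ can be as large as $\sim n$ (a constant times the reciprocal of the smallest chord), but the set of such $(t,u)$ has measure $\sim 1/n$, so a crude bound gives a contribution $\sim n^s\cdot n^{-1}$ which only works for $s<1$; for general $s<3$ one must instead exploit that three \emph{distinct} vertices of an inscribed polygon on a $C^2$ curve satisfy $\kappa(x_i^n,x_j^n,x_k^n)\lesssim \min\{\tfrac{1}{\abs{t-u}},\tfrac{1}{\abs{u-v}},\tfrac{1}{\abs{t-v}}\}$ up to the curvature bound of $\gamma$, i.e.\ the same majorant that makes $\mathcal{M}_s(\gamma)$ finite, so that the near-diagonal contributions are uniformly absolutely continuous and vanish in the limit. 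Once domination is in place, dominated convergence closes the argument; alternatively, if one prefers, the $\liminf$-inequality is immediate from Theorem \ref{maintheoremgammamenger} (since $p_n\to\gamma$ in every norm considered, as $\gamma\in C^2$), and only the $\limsup$-inequality $\limsup_n\mathcal{M}_{s,n}(p_n)\le\mathcal{M}_s(\gamma)$ needs the above estimates — which is the cleaner route and the one I would ultimately write up, deriving the $\limsup$ bound from the uniform majorant and reverse Fatou, and the $\liminf$ bound from $\Gamma$-convergence.
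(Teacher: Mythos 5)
Your overall skeleton --- rewrite $\mathcal{M}_{s,n}(p_n)$ as a triple integral of step functions, prove pointwise convergence of the integrand off the diagonal, and close with dominated convergence --- is the natural one (the survey itself gives no proof, deferring to the cited thesis). However, your step (3), which you yourself flag as the crux, rests on a false premise. You claim that when two sample points lie in the same or adjacent arcs, $\kappa(\gamma_n(t),\gamma_n(u),\gamma_n(v))$ ``can be as large as $\sim n$''. It cannot: every value of $\gamma_n$ is a vertex $x_i^n=\gamma(a_i^n)$ lying \emph{on} the embedded $C^2$ curve $\gamma$, which has positive thickness $\Delta[\gamma]>0$, and by the very definition $\Delta[\gamma]=\inf_{s\not=t\not=u\not=s}r\bigl(\gamma(s),\gamma(t),\gamma(u)\bigr)$ every triple of \emph{distinct} points on $\gamma$ satisfies $\kappa\leq\Delta[\gamma]^{-1}$. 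Hence the integrand is uniformly bounded by $\Delta[\gamma]^{-s}$ times the (bounded, convergent) weights, the dominating function is a constant, and the excluded near-diagonal region, having measure $O(1/n)$, contributes $O(1/n)$. This one observation makes the domination step trivial and covers all $s\in(0,\infty)$ at once; it is also the reason $\mathcal{M}_s(\gamma)<\infty$ for every $s$, not the Sobolev--Slobodeckij embedding you invoke (which is only the right characterisation for $s>3$).

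The substitute majorant you propose for the near-diagonal region, $\kappa\lesssim\min\{\abs{t-u}^{-1},\abs{u-v}^{-1},\abs{t-v}^{-1}\}=\diam\{t,u,v\}^{-1}$, would in any case not rescue the argument: the measure of $\{(u,v):\diam\{t,u,v\}\leq d\}$ is of order $d^{2}$, so the $s$-th power of this majorant is integrable over $\sphere_1^3$ only for $s<2$, short of the full range claimed in the corollary. So as written your step (3) fails for $s\geq 2$, even though the statement is true. With step (3) replaced by the uniform thickness bound, the rest of your plan --- pointwise convergence from continuity of $\kappa$ at triples of distinct points together with $\gamma_n\to\gamma$ uniformly, and convergence of the weights --- goes through, and the detour through the $\Gamma$-$\liminf$ inequality becomes unnecessary.
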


\section{Thickness}\label{sectionthickness}

The \emph{thickness} $\Delta[\gamma]$ of a curve $\gamma$ was introduced by Gonzalez and Maddocks (see \cite{Gonzalez1999a}) as
\begin{align*}
	\Delta[\gamma]\vcentcolon=\inf_{s\not= t\not= u\not= s}r\bigl(\gamma(s),\gamma(t),\gamma(u)\bigr)
\end{align*}
and is equivalent to Federer's reach (see \cite{Federer1959a}). 
As in the previous section, $r(x,y,z)$ is the circumradius of the three points $x,y$ and $z$.
Geometrically, the thickness of a curve gives the radius of the largest uniform tubular neighbourhood about the curve that does not intersect itself. 
The \emph{ropelength}, which is length divided by thickness, is scale invariant and a knot is called \emph{ideal} if it minimizes ropelength in 
a fixed knot class or, equivalently, minimizes this energy amongst all curves in this knot class with fixed length. Ideal knots are of great interest, not only to mathematicians but also to biologists, 
chemists and physicists, since they exhibit interesting physical features and resemble the time-averaged shapes of knotted DNA molecules in solution (see \cite{Stasiak1996a,Katritch1996a,Katritch1997a}
and \cite{1998a,Simon2002a} for an overview of physical knot theory with applications). 
The existence of ideal knots in every knot class was settled by different teams of authors (see \cite{Cantarella2002a,Gonzalez2002b,Gonzalez2003a}) and it was found that the unique absolute minimizer
is the round circle.
Furthermore, this energy is self-repulsive, meaning that finite energy prevents the curve from having self intersections. By now it is well-known that thick curves, or in general manifolds of
positive reach, are of class $C^{1,1}$ and vice versa (see \cite{Lucas1957a,Federer1959a,Schuricht2003a,Lytchak2005a,Scholtes2013a}).
It was shown that ideal links must not be of class $C^{2}$ (see \cite{Cantarella2002a}) and computer experiments suggest that
$C^{1,1}$ regularity is optimal for knots, too (see \cite{Sullivan2002a}).
Further computer experiments were carried out with the software packages \texttt{SONO}, \texttt{libbiarc} and \texttt{ridgerunner} (see \cite{Pieranski1998a,Carlen2010a,Ashton2011a}).
A previous conjecture (see \cite[Conjecture 24]{Cantarella2002a}) that ropelength minimizers are piecewise analytic seems to be reversed by numerical results,
which indicate that there might be more singularities than previously expected (see \cite{Baranska2008a,Przybyl2014b}). 
Further interesting properties of critical points as well as the Euler-Lagrange equation were investigated (see \cite{Schuricht2003a,Schuricht2004a,Cantarella2014b}).
\bigskip

Another way to write the thickness of a thick arc length curve is 
\begin{align}\label{formulathickness2}
	\Delta[\gamma]=\min\left\{\minRad(\gamma),2^{-1}\dcsd(\gamma)\right\}
\end{align} 
(see \cite[Theorem 1]{Litherland1999a}). 
The minimal radius of curvature $\minRad(\gamma)$ of $\gamma$ is the inverse of the maximal curvature 
\begin{align*}
	\maxCurv(\gamma)\vcentcolon=||\kappa||_{L^{\infty}}\qquad\text{and}\qquad\dcsd(\gamma)\vcentcolon=\min_{(x,y)\in \dcrit(\gamma)}\abs{y-x} 
\end{align*}
is the doubly critical self distance. The set of doubly critical points $\dcrit(\gamma)$ of a $C^{1}$ curve $\gamma$ consists of all
pairs $(x,y)$ where $x=\gamma(t)$ and $y=\gamma(s)$ are distinct points on $\gamma$ so that 
\begin{align*}
	\langle \gamma'(t),\gamma(t)-\gamma(s) \rangle=\langle \gamma'(s),\gamma(t)-\gamma(s) \rangle=0,
\end{align*}
i.e. $s$ is critical for $u\mapsto\abs{\gamma(t)-\gamma(u)}^{2}$ and $t$ for $v\mapsto\abs{\gamma(v)-\gamma(s)}^{2}$.
\bigskip

The \emph{discrete thickness} $\Delta_{n}$, derived from the representation in \eqref{formulathickness2}, was introduced by Rawdon (see \cite{Rawdon1997a}).
The curvature of a polygon, localized at a vertex $y$,
is defined by
\begin{align*}
	\kappa_{d}(x,y,z)\vcentcolon=\frac{2\tan(\frac{\phi}{2})}{\frac{\abs{x-y}+\abs{z-y}}{2}},
\end{align*}
where $x$ and $z$ are the vertices adjacent to $y$ and $\phi=\measuredangle(y-x,z-y)$ is the exterior angle at $y$.
We then set 
\begin{align*}
	\minRad(p)\vcentcolon=\maxCurv(p)^{-1}\vcentcolon=\min_{i=1,\ldots,n}\kappa_{d}^{-1}(x_{i-1},x_{i},x_{i+1})
\end{align*}
if the polygon $p$ has the consecutive vertices $x_{i}$, $x_{0}\vcentcolon= x_{n}$, $x_{n+1}\vcentcolon=x_{1}$.
The doubly critical self distance of a polygon $p$ is given as for a smooth curve if we define $\dcrit(p)$ to consist of 
pairs $(x,y)$ where $x=p(t)$ and $y=p(s)$ and  $s$ locally extremizes $u\mapsto\abs{p(t)-p(u)}^{2}$ and $t$ locally extremizes $v\mapsto\abs{p(v)-p(s)}^{2}$.
Now, $\Delta_{n}$ is defined analogous to \eqref{formulathickness2} by
\begin{align*}
	\Delta_{n}[p]=\min\left\{\minRad(p),2^{-1}\dcsd(p)\right\}
\end{align*}
if all vertices are distinct and $\Delta_{n}[p]=0$ if two vertices of $p$ coincide.
In a series of works (see \cite{Rawdon1997a,Rawdon1998a,Rawdon2000a,Rawdon2003a,Millett2008a}) alternative representations and properties of the discrete thickness were established.
For example, it was shown that the discrete thickness is continuous:

\begin{theorem}[Discrete thickness is continuous, \cite{Rawdon1997a}]
	The discrete thickness $\Delta_{n}$ is continuous on the space of simple closed polygons with $n$-segments with regard to the metric
	\begin{align*}
		d(p,q)\vcentcolon=\sum_{k=1}^{n}\abs{x_{i}-y_{i}},
	\end{align*}
	where $p$ and $q$ are polygons with vertices $x_{i}$ and $y_{i}$.
\end{theorem}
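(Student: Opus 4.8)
My plan is to use the representation $\Delta_{n}[p]=\min\{\minRad(p),2^{-1}\dcsd(p)\}$. Since the metric $d$ is just the distance coming from a norm on the space $(\R^{d})^{n}$ of vertex tuples, and the simple closed $n$-gons form an open subset of $(\R^{d})^{n}$, continuity of $\Delta_{n}$ with respect to $d$ is the same as continuity, near each such polygon, of $\Delta_{n}$ viewed as a function of the vertices. On this set no two vertices coincide, so the degenerate value $\Delta_{n}[p]=0$ never occurs, and because the minimum of two continuous functions is continuous it suffices to prove that $\minRad$ and $\dcsd$ are each continuous there.

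For $\minRad$ this is short. For distinct points $x,y,z$ the exterior angle $\phi=\measuredangle(y-x,z-y)$ and the lengths $\abs{x-y},\abs{z-y}$ depend continuously on $(x,y,z)$, hence so does $\kappa_{d}(x,y,z)=2\tan(\phi/2)\big/\tfrac12(\abs{x-y}+\abs{z-y})$, and therefore $\kappa_{d}^{-1}$ as a map into $(0,\infty]$. On a simple polygon consecutive vertices are distinct, so $\minRad(p)=\min_{i}\kappa_{d}^{-1}(x_{i-1},x_{i},x_{i+1})$ is a minimum of finitely many continuous functions and is continuous. (If some exterior angle vanishes then $\minRad=+\infty$, which is harmless, because the minimum $\Delta_{n}$ is then governed by the finite quantity $2^{-1}\dcsd$.)

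The real work is the continuity of $\dcsd$. I would first establish the structural fact that, for a simple polygon, a doubly critical pair $(x,y)$, $x=p(t)$, $y=p(s)$, is built from a short finite list of configuration types: each of $x,y$ is either a vertex of $p$ or an interior edge point that is the orthogonal foot of the other point onto the line containing that edge; such a pair never consists of two interior points of one edge or of two adjacent edges; and $\dcrit(p)$ is nonempty (a pair realising $\max\abs{p(t)-p(s)}$ over all parameter pairs is doubly critical). Consequently $\dcrit(p)$ is finite and $\dcsd(p)$ equals the minimum, taken over the finitely many admissible configurations that are actually realised at $p$ (the relevant feet lying in the corresponding closed segments), of the associated distance — a vertex-to-vertex distance, a point-to-line distance, or the length of the common perpendicular of two lines. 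Each of these is an explicit continuous function of the vertices, and the events where a foot leaves a segment, or two edges become parallel, are precisely where one such formula joins continuously onto a vertex-to-vertex one; the explicit representations of $\dcsd(p)$ in \cite{Rawdon1997a,Rawdon1998a} supply the formulas.

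With that in place, continuity of $\dcsd$ at a fixed $p_{0}$ splits into two bounds. For $\limsup_{p\to p_{0}}\dcsd(p)\le\dcsd(p_{0})$ I would take a doubly critical pair of $p_{0}$ realising $\dcsd(p_{0})$ and show that it persists under small perturbations of the vertices, up to possibly degenerating into a vertex pair that itself persists; pushing this through for each configuration type is the main case analysis. For $\liminf_{p\to p_{0}}\dcsd(p)\ge\dcsd(p_{0})$ I would use that for $p$ near $p_{0}$ all edge lengths stay bounded below and the polygon stays simple, so no doubly critical pair can collapse, and hence a subsequence of minimising doubly critical pairs $(x_{p},y_{p})$ converges to a pair $(x_{*},y_{*})$ on $p_{0}$ with $x_{*}\ne y_{*}$; if $(x_{*},y_{*})$ itself failed to be doubly critical for $p_{0}$, this could only be because a limiting ``foot'' sits at a vertex $w$ past which an adjacent edge bends toward $y_{*}$, in which case $p_{0}$ comes strictly closer to $y_{*}$ along that edge, and following this ``closer'' direction around the finite polygon produces a genuine doubly critical pair of $p_{0}$ of distance at most $\abs{x_{*}-y_{*}}$, giving $\dcsd(p_{0})\le\abs{x_{*}-y_{*}}=\lim\dcsd(p)$. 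I expect this last point — ruling out a downward jump of $\dcsd$ by controlling where and how a minimising doubly critical pair can degenerate — to be the principal obstacle; the rest is routine once the finite list of configurations and their continuous distance formulas are available.
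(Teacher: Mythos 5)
This survey states the theorem with a citation to Rawdon's thesis \cite{Rawdon1997a} and reproduces no proof, so your proposal can only be measured against the argument it would have to contain. Your reduction is the right one and matches the structure of the original: work on the open set of vertex tuples defining simple closed $n$-gons, use $\Delta_{n}[p]=\min\{\minRad(p),2^{-1}\dcsd(p)\}$, and observe that the $\minRad$ part is an elementary finite minimum of continuous functions (your treatment of the $\phi\to 0$ degeneracy via values in $(0,\infty]$ is correct, and $\phi=\pi$ cannot occur on a simple polygon). The problem is that everything you say about $\dcsd$ is a plan rather than a proof, and the plan has concrete defects beyond the missing details.

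First, the structural claim that $\dcrit(p)$ is finite is false: two parallel non-adjacent edges produce a continuum of doubly critical pairs (each interior point paired with its orthogonal foot), so the reduction must be to a finite list of configuration \emph{types} (vertex--vertex, vertex--foot, common perpendicular), not to a finite set of pairs. Second, and more seriously, the $\limsup$ step is not ``the minimising pair persists or degenerates into a vertex pair that itself persists.'' The parallel-edge example shows why: after an arbitrarily small tilt the common perpendicular of the two lines can have both feet outside the segments, the configuration realising $\dcsd(p_{0})$ disappears entirely, and the limiting vertex pair need not be doubly critical because an adjacent edge may bend back toward the other strand. So the $\limsup$ direction needs exactly the same descent machinery you invoke for the $\liminf$ direction. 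Third, that descent --- ``follow the closer direction around the polygon to a genuine doubly critical pair'' --- as stated only restores criticality at one endpoint of the pair; it must be alternated between the two endpoints, the distance is then only non-increasing, and one must prove that the iteration terminates at (or subconverges to) a doubly critical pair of $p_{0}$ without the two points collapsing onto one another along a single edge or across a vertex. Ruling out that collapse, and showing the alternating descent lands in $\dcrit(p_{0})$ at distance no larger than where it started, is the actual mathematical content of the continuity of $\dcsd$; you explicitly defer it as ``the principal obstacle,'' so the proof is not complete. The case analysis of doubly critical configurations carried out in \cite{Rawdon1997a,Rawdon2000a} is precisely what fills this hole.
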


Another important result is the existence of discrete minimizers:

\begin{theorem}[Existence of discrete minimizers for $\Delta_{n}$, \cite{Rawdon2003a}]
	For every tame knot class $\mathcal{K}$ there is an ideal polygonal knot in $\mathcal{P}_{n}(\mathcal{K})$. 
\end{theorem}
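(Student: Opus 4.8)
The plan is to run the direct method of the calculus of variations within the finite-dimensional parameter space of polygons with $n$ vertices lying in a fixed tame knot class $\mathcal{K}$. Since the ropelength (length divided by thickness) is scale invariant, I may normalise and work instead with the equivalent problem of maximising the discrete thickness $\Delta_n$ among polygons of fixed length $1$ in $\mathcal{P}_n(\mathcal{K})$; an ideal polygon is then a maximiser of $\Delta_n$ under this length constraint. First I would fix a polygon $p_0\in\mathcal{P}_n(\mathcal{K})$ (one exists since $\mathcal{K}$ is tame and every tame knot type is realised by some polygon, and then refined to have $n$ edges by inserting vertices along edges, which does not change the knot type) and set $\Delta^\ast\vcentcolon=\sup\{\Delta_n[p] : p\in\mathcal{P}_n(\mathcal{K}),\ \length(p)=1\}\ge\Delta_n[p_0]>0$. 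Take a maximising sequence $p_k$ with $\Delta_n[p_k]\to\Delta^\ast$.

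Next I would extract a convergent subsequence. Each $p_k$ is determined by its ordered vertex tuple $(x_1^k,\dots,x_n^k)\in(\R^d)^n$; modulo translations we may assume the centre of mass is at the origin, and since $\length(p_k)=1$ the diameter is bounded by $1/2$, so the vertex tuples lie in a compact subset of $(\R^d)^n$. Passing to a subsequence, $x_i^k\to x_i$ for each $i$, i.e. $p_k\to p$ in the metric $d(p,q)=\sum_i\abs{x_i-y_i}$. Here is the first delicate point: a priori $p$ might be degenerate (two vertices coincide, or three consecutive vertices become collinear, or an edge shrinks to zero length). The bound $\Delta_n[p_k]\ge\tfrac12\Delta^\ast>0$ for large $k$ prevents the worst degeneracies: if two vertices coincided in the limit then $\dcsd(p_k)\to 0$ (or if adjacent, the curvature term blows up), forcing $\Delta_n[p_k]\to 0$, a contradiction; similarly collinearity of three consecutive vertices makes $\kappa_d\to\infty$ hence $\minRad\to 0$, and a vanishing edge length likewise drives $\kappa_d\to\infty$. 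So the limit $p$ is a simple polygon with $n$ genuine edges, all vertices distinct and no degenerate angles, and $\length(p)=1$.

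Then I would invoke the continuity theorem for $\Delta_n$ on the space of simple closed $n$-gons (the preceding ``Discrete thickness is continuous'' result of Rawdon), applied with respect to exactly the metric $d$ above, to conclude $\Delta_n[p_k]\to\Delta_n[p]$, whence $\Delta_n[p]=\Delta^\ast$. It remains to check $p\in\mathcal{P}_n(\mathcal{K})$, i.e. that $p$ has the correct knot type. This is the \textbf{main obstacle}, and it is handled by the standard fact that the knot type of a polygon is locally constant: if $p$ is a simple polygon, every polygon whose vertices are sufficiently close to those of $p$ is ambient isotopic to $p$ (one can, for instance, build the isotopy edge by edge, using that a thin enough tube around $p$ is embedded). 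Since $p_k\to p$ in $d$ and each $p_k$ has knot type $\mathcal{K}$, for $k$ large $p_k$ is isotopic to $p$, hence $p$ has knot type $\mathcal{K}$ as well. Therefore $p$ is an ideal polygonal knot in $\mathcal{P}_n(\mathcal{K})$, completing the argument. The only real work is the degeneracy analysis in the second step, ensuring the limit stays in the open set of simple nondegenerate $n$-gons where both $\Delta_n$ is continuous and the knot type is stable; everything else is compactness plus citation of the two earlier results.
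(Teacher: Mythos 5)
The paper states this theorem without proof, citing Rawdon's \emph{Can computers discover ideal knots?}; your direct-method argument (normalise length, take a maximising sequence for $\Delta_n$, extract a vertex-wise convergent subsequence, rule out degeneracies via the positive lower bound on $\Delta_n$, apply the continuity theorem, and preserve the knot class by local constancy of knot type near a simple polygon) is essentially the same strategy as the cited proof. One point is underestimated: the degeneracy analysis must exclude the limit polygon self-intersecting at \emph{non-vertex} points, i.e.\ two non-adjacent edges approaching each other, which is not covered by ``two vertices coincide.'' Ruling this out requires the quantitative self-avoidance of $\Delta_n$ --- that a pair of points realising the minimal distance between non-adjacent edges is either doubly critical (so $\dcsd$ is small) or reduces to a near-doubling-back controlled by $\minRad$ --- which is exactly the content of Rawdon's tube/injectivity-radius lemmas. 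Also note that equilaterality makes the ``vanishing edge'' case vacuous, and that collinearity of consecutive vertices is only harmful when the exterior angle tends to $\pi$ (the straightening case $\phi\to 0$ is harmless). These are refinements of a step you correctly identified as the real work, not a change of approach.
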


More generally, the result remains true if instead of equilateral polygons one takes the space of polygons with a uniform bound on longest to shortest segment length.
These results were then used to prove the convergence of ideal polygonal to smooth ideal knots, a result that could later be improved from $C^{0}$ convergence to $C^{0,1}$ convergence

\begin{corollary}[Ideal polygonal knots converge to smooth ideal knots, \cite{Rawdon2003a,Scholtes2014d}]\label{theoremconvergenceofminimizers}
	Let $\mathcal{K}$ be a tame knot class and $p_{n}\in\mathcal{P}_{n}(\mathcal{K})$ bounded in $L^{\infty}$ with $\abs{\inf_{\mathcal{P}_{n}(\mathcal{K})}\Delta_{n}^{-1}-\Delta_{n}[p_{n}]^{-1}}\to 0$. 
	Then, there is a subsequence
	\begin{align*}
		p_{n_{k}}\xrightarrow[k\to\infty]{W^{1,\infty}(\sphere_{1},\R^{3})}\gamma\in \mathcal{C}(\mathcal{K})\qquad\text{with}\qquad
		\Delta^{-1}[\gamma]=\inf_{\mathcal{C}(\mathcal{K})}\Delta^{-1}=\lim_{k\to\infty}\Delta_{n_{k}}^{-1}[p_{n_{k}}].
	\end{align*}
\end{corollary}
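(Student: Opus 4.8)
The plan is to carry out the direct method at the discrete level and then pass to the limit, using the thickness bound to extract a convergent subsequence, to keep the knot class fixed, and to recognise the limit as an ideal knot. Write $I:=\inf_{\mathcal{C}(\mathcal{K})}\Delta^{-1}$ and $I_{n}:=\inf_{\mathcal{P}_{n}(\mathcal{K})}\Delta_{n}^{-1}$. \emph{Uniform lower bound on the thickness.} I would first recall that $\mathcal{C}(\mathcal{K})$ contains a ropelength minimiser $\eta$ with $\Delta^{-1}[\eta]=I$; having positive thickness it is of class $C^{1,1}$. Inscribing suitable polygons $q_{n}$ with $n$ vertices in $\eta$, one has $q_{n}\to\eta$ in $C^{0}$, so for large $n$ the polygon $q_{n}$ lies in the embedded tubular neighbourhood of $\eta$ and hence $q_{n}\in\mathcal{P}_{n}(\mathcal{K})$; moreover the discrete thickness of inscribed polygons converges, $\Delta_{n}[q_{n}]\to\Delta[\eta]$. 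Therefore $I_{n}\le\Delta_{n}^{-1}[q_{n}]\to I$, and the almost-minimality hypothesis gives $\limsup_{n}\Delta_{n}^{-1}[p_{n}]=\limsup_{n}I_{n}\le I$. In particular there is $\delta>0$ with $\Delta_{n}[p_{n}]\ge\delta$, hence $\minRad(p_{n})\ge\delta$ and $\dcsd(p_{n})\ge2\delta$, for all large $n$.

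\emph{Compactness.} The bound $\minRad(p_{n})\ge\delta$ forces every exterior angle of $p_{n}$ to satisfy $\tan(\phi_{i}/2)\le\tfrac{1}{2n\delta}$, so $\phi_{i}\le\tfrac{1}{n\delta}$ and $\sum_{i}\phi_{i}\le\tfrac1\delta$; thus the $p_{n}$ have uniformly bounded total turning and their piecewise constant unit tangents have jumps tending to $0$ uniformly in $i$. Combined with the $L^{\infty}$-bound and the fact that each $p_{n}$ is $1$-Lipschitz, an Arzel\`a--Ascoli argument applied simultaneously to the $p_{n}$ and to their tangent indicatrices yields a subsequence with $p_{n_{k}}\to\gamma$ in $W^{1,\infty}(\sphere_{1},\R^{3})$. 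Since $\abs{p_{n_{k}}'}=1$ a.e.\ and the derivatives converge uniformly, $\gamma$ is a closed arc length curve of length $1$, i.e.\ $\gamma\in\mathcal{C}$.

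\emph{The limit is thick and lies in $\mathcal{K}$.} Using Rawdon's description of $\Delta_{n}[p]$ as the smooth thickness of a $C^{1,1}$ corner-rounding $\bar p$ of $p$, which under the angle bound of the previous step satisfies $\norm{\bar p_{n}-p_{n}}_{C^{0}}\to0$, one gets $\bar p_{n_{k}}\to\gamma$ in $C^{0}$; since $\gamma\mapsto\Delta[\gamma]=\inf_{s\neq t\neq u}r(\gamma(s),\gamma(t),\gamma(u))$ is an infimum of continuous functionals and hence upper semicontinuous under uniform convergence, $\Delta[\gamma]\ge\limsup_{k}\Delta[\bar p_{n_{k}}]=\limsup_{k}\Delta_{n_{k}}[p_{n_{k}}]\ge\delta>0$. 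Thus $\gamma$ is a genuine thick, hence embedded and $C^{1,1}$, curve carrying an embedded $\delta$-tube. As $p_{n_{k}}\to\gamma$ also in $C^{1}$, for large $k$ the polygon $p_{n_{k}}$ lies in this tube and crosses each normal disk exactly once, so the nearest-point retraction onto $\gamma$ restricts to a homeomorphism from $p_{n_{k}}(\sphere_{1})$ onto $\gamma(\sphere_{1})$; therefore $p_{n_{k}}$ and $\gamma$ are ambient isotopic and $\gamma\in\mathcal{C}(\mathcal{K})$.

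\emph{Optimality and the main difficulty.} Since $\gamma\in\mathcal{C}(\mathcal{K})$ we have $\Delta^{-1}[\gamma]\ge I$; conversely the estimate just obtained reads $\Delta^{-1}[\gamma]\le\liminf_{k}\Delta_{n_{k}}^{-1}[p_{n_{k}}]$, while the first step gave $\limsup_{n}\Delta_{n}^{-1}[p_{n}]\le I$. Chaining these inequalities yields $\Delta^{-1}[\gamma]=I=\lim_{k}\Delta_{n_{k}}^{-1}[p_{n_{k}}]$, as asserted. I expect the technical heart to be twofold: first, the quantitative behaviour of the discrete thickness under inscription and under corner-rounding — the facts $\Delta_{n}[q_{n}]\to\Delta[\eta]$ and $\Delta_{n}[p]=\Delta[\bar p]$ with $\norm{\bar p-p}_{C^{0}}$ small — since this is exactly what couples the discrete and smooth ropelengths and requires a careful analysis of $\minRad$ and $\dcsd$; and second, upgrading the $C^{1}$-closeness of $p_{n_{k}}$ and $\gamma$ to genuine equality of knot class, for which the positivity of the limiting thickness and the resulting tubular neighbourhood are indispensable. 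The passage from the earlier $C^{0}$ statement to $W^{1,\infty}$ convergence is precisely the gain obtained by exploiting the curvature bound $\minRad(p_{n})\ge\delta$ in the compactness step rather than only the $1$-Lipschitz bound.
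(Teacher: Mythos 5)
Your proof is correct and follows essentially the route the paper intends: the survey presents this corollary as the combination of a compactness statement (coming from the uniform lower bound on $\Delta_{n}[p_{n}]$, hence on the discrete curvature) with the $\Gamma$-convergence of Theorem \ref{gammaconvergence} and Theorem \ref{dalmasoproposition}, and your three steps are exactly the recovery-sequence ($\limsup$) inequality via polygons inscribed in an ideal knot, the $\liminf$ inequality via corner-rounding plus upper semicontinuity of thickness under uniform convergence, and the accompanying compactness with preservation of the knot class inside the $\delta$-tube. The points you leave implicit --- producing \emph{equilateral} inscribed $n$-gons of length one in the merely $C^{1,1}$ ideal knot, and the fact that Rawdon's comparison between $\Delta_{n}[p]$ and the thickness of a rounded curve is a two-sided estimate (with error controlled by the exterior angles, which you have shown tend to zero) rather than an exact equality --- are precisely the technical lemmas of \cite{Rawdon2003a,Scholtes2014d} that you correctly identify as the heart of the matter.
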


The relationship on a functional level is again captured by a $\Gamma$-convergence result:

\begin{theorem}[Convergence of discrete inverse to smooth inverse thickness, \cite{Scholtes2014d}]\label{gammaconvergence}
	For every tame knot class $\mathcal{K}$ holds
	\begin{align*}
		\Delta_{n}^{-1}\xrightarrow{\Gamma}\Delta^{-1}\quad\text{on }(\mathcal{C}(\mathcal{K}),||\cdot||_{W^{1,\infty}(\sphere_{1},\R^{3})}).
	\end{align*}
\end{theorem}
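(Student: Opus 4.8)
The plan is to establish the two inequalities required for $\Gamma$-convergence with respect to the $W^{1,\infty}$-norm: the $\liminf$-inequality (lower bound) and the existence of a recovery sequence (upper bound). Throughout, I would work with the representation $\Delta_n^{-1}[p]=\max\{\maxCurv(p),2\,\dcsd(p)^{-1}\}$ and $\Delta^{-1}[\gamma]=\max\{\maxCurv(\gamma),2\,\dcsd(\gamma)^{-1}\}$ coming from \eqref{formulathickness2}, so that the problem splits into controlling the maximal curvature term and the doubly critical self-distance term separately, taking maxima at the end (the maximum of two $\Gamma$-convergent functionals being handled by the usual argument once each piece is understood). Since the ambient convergence is $W^{1,\infty}$, i.e. uniform convergence of curves \emph{and} of their unit tangents, many of the geometric quantities involved are actually continuous, which will do most of the work for the $\dcsd$-term.

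First I would treat the recovery sequence. Given $\gamma\in\mathcal C(\mathcal K)$, if $\Delta^{-1}[\gamma]=\infty$ there is nothing to prove, so assume $\gamma$ is thick, hence of class $C^{1,1}$ by the regularity results quoted in the Thickness section. I would take $p_n$ to be equilateral polygons inscribed in $\gamma$ at an arc-length-uniform subdivision; then $p_n\to\gamma$ in $W^{1,\infty}$ because $\gamma\in C^{1,1}\subset W^{2,\infty}$ gives tangent vectors of the chords converging uniformly to $\gamma'$. For such inscribed polygons one shows $\maxCurv(p_n)\to\maxCurv(\gamma)$: the discrete curvature $\kappa_d(x_{i-1},x_i,x_{i+1})=2\tan(\phi_i/2)/\big(\tfrac12(|x_i-x_{i-1}|+|x_{i+1}-x_i|)\big)$ is, up to $O(1/n)$ errors controlled by the Lipschitz constant of $\gamma'$, a second-difference quotient approximating $|\gamma''|$, and one must check it does not overshoot $\maxCurv(\gamma)$ in the limit — this is a one-sided estimate using that the exterior angle $\phi_i$ is bounded by the total curvature over the corresponding arc. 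Simultaneously $\dcsd(p_n)\to\dcsd(\gamma)$ by a uniform-convergence/compactness argument on the (compact) set of doubly critical pairs, using that the defining orthogonality conditions are closed under $W^{1,\infty}$-convergence and that local extremizers of $|p_n(t)-p_n(\cdot)|^2$ accumulate at local extremizers of $|\gamma(t)-\gamma(\cdot)|^2$. Combining, $\Delta_n^{-1}[p_n]\to\Delta^{-1}[\gamma]$.

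For the $\liminf$-inequality, let $p_n\to\gamma$ in $W^{1,\infty}$ with $p_n\in\mathcal C(\mathcal K)$ (so in particular $\gamma\in\mathcal C(\mathcal K)$, since the knot class is preserved under $C^0$-close embeddings and one may discard the subsequence where $p_n$ is singular, on which $\Delta_n^{-1}=\infty$) and assume $\liminf_n\Delta_n^{-1}[p_n]=:A<\infty$. Passing to a subsequence realizing the $\liminf$, I need $\maxCurv(\gamma)\le A$ and $2\,\dcsd(\gamma)^{-1}\le A$. The curvature bound is the delicate part: from $\maxCurv(p_n)\le A$ one gets that each exterior angle satisfies $\tan(\phi_{i,n}/2)\le \tfrac{A}{4}(|x_i-x_{i-1}|+|x_{i+1}-x_i|)$, which says the polygons bend slowly; this is precisely the kind of discrete "bounded curvature" hypothesis under which a $W^{1,\infty}$-limit is $C^{1,1}$ with $\maxCurv(\gamma)\le A$ — I would prove it by showing the arc-length reparametrized tangent indicatrices of $p_n$ are uniformly Lipschitz (with constant $\to A$) and pass to the limit, or equivalently show the inscribed-circle radii at the limit are $\ge A^{-1}$. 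For the self-distance term, lower semicontinuity of $\dcsd^{-1}$ under $W^{1,\infty}$-convergence follows because a doubly critical pair for $\gamma$ with $|y-x|$ small can be approximated by nearly-doubly-critical configurations for $p_n$, forcing $\dcsd(p_n)$ to be not much larger, so $\dcsd(\gamma)\ge\limsup\dcsd(p_n)$; here one must be slightly careful to distinguish the curvature-type and distance-type ways the infimum in $\Delta_n$ can be attained, but $\Delta_n[p_n]\le 2^{-1}\dcsd(p_n)$ always, giving the needed inequality directly.

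The main obstacle I expect is the curvature comparison — proving that discrete maximal curvature passes to smooth maximal curvature in both directions under merely $W^{1,\infty}$-convergence, without assuming a priori that the limit is $C^{1,1}$. The upper bound direction (recovery sequence) needs a quantitative one-sided estimate $\kappa_d(x_{i-1},x_i,x_{i+1})\le \maxCurv(\gamma)+o(1)$ for inscribed polygons, and the lower bound direction needs the reverse implication "uniformly bounded discrete curvature $\Rightarrow$ limit has bounded curvature," which is essentially a discrete-to-continuous compactness statement for curves of bounded turning. Both are presumably handled in \cite{Scholtes2014d}; everything else (knot class preservation, the $\dcsd$ continuity, assembling the maxima) is routine given the uniform-tangent convergence built into the $W^{1,\infty}$ topology.
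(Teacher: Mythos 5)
This survey states the theorem without proof (it is quoted from \cite{Scholtes2014d}), so your outline can only be measured against the strategy of that reference. Your overall architecture --- splitting $\Delta_{n}^{-1}=\max\{\maxCurv,2\,\dcsd^{-1}\}$ via \eqref{formulathickness2}, recovering $\gamma$ by inscribed equilateral polygons, and deducing a curvature bound on the $W^{1,\infty}$-limit from the uniform discrete curvature bound via the total turning of the tangent --- is the right one, and you correctly identify the discrete-to-smooth curvature comparison as the technical core.

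There is, however, a genuine gap in your $\liminf$ inequality for the self-distance term. You argue that $\dcsd(\gamma)\geq\liminf_{n}\dcsd(p_{n})$ because a doubly critical pair of $\gamma$ ``can be approximated by nearly-doubly-critical configurations for $p_{n}$.'' This is exactly the unstable direction: critical points of $u\mapsto|\gamma(t)-\gamma(u)|^{2}$ can be \emph{created} in the limit without any local extremizer of $u\mapsto|p_{n}(t)-p_{n}(u)|^{2}$ nearby, since a degenerate critical point of the limiting distance function need not persist under the small perturbation back to $p_{n}$ (compare $u^{3}$, which has a critical point, with $u^{3}+u/n$, which has none). Hence $\dcsd(p_{n})\geq 2\Delta_{n}[p_{n}]$ does not transfer to $\dcsd(\gamma)\geq 2/A$ by the argument you give. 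The standard repair avoids $\dcsd$ on the smooth side altogether: $\Delta^{-1}[\gamma]=\sup_{s\neq t\neq u\neq s}\kappa\bigl(\gamma(s),\gamma(t),\gamma(u)\bigr)$ is a supremum of functions continuous under $C^{0}$-convergence, hence lower semicontinuous, and one then needs only a one-sided comparison on the polygonal side --- that triples of points on $p_{n}$ with mutual distances bounded away from zero have circumradius at least $\Delta_{n}[p_{n}]-\epsilon_{n}$ (the clustering triples being precisely those controlled by your curvature argument), or equivalently one routes through a $C^{1,1}$ curve $\tilde\gamma_{n}$ close to $p_{n}$ with $\Delta[\tilde\gamma_{n}]\geq\Delta_{n}[p_{n}]-\epsilon_{n}$ in the spirit of Rawdon's approximation results. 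Two smaller points: a subdivision of $\gamma$ into arcs of equal length does \emph{not} yield an equilateral polygon, so the recovery sequence requires the existence of inscribed equilateral $n$-gons plus a rescaling to length $1$ before it lands in $\mathcal{P}_{n}(\mathcal{K})$; and in the recovery direction the doubly critical pairs of $p_{n}$ whose parameters coalesce must be excluded by a quantitative local non-criticality estimate in terms of $\minRad(p_{n})$, which you gesture at but which is not automatic.
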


Similar questions for more general energies were considered (see \cite{Dai2000a,Rawdon2003a}).
If the knot class is not fixed, the unique absolute minimizers of $\Delta_{n}^{-1}$ is the regular $n$-gon:

\begin{proposition}[Regular $n$-gon is unique minimizer of $\Delta_{n}^{-1}$, \cite{Scholtes2014d}]\label{regularngonminimizesdiscretethickness}
	The unique minimizer of $\Delta_{n}^{-1}$ in $\mathcal{P}_{n}$ is the regular $n$-gon.
\end{proposition}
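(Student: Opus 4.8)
The plan is to show that among all equilateral polygons $p\in\mathcal{P}_n$, the quantity $\Delta_n^{-1}[p]=\max\{\maxCurv(p),2\,\dcsd(p)^{-1}\}$ is uniquely minimised by the regular $n$-gon. Since $\Delta_n[p]=\min\{\minRad(p),\tfrac12\dcsd(p)\}$, minimising $\Delta_n^{-1}$ is the same as maximising $\min\{\minRad(p),\tfrac12\dcsd(p)\}$. I would first reduce to a purely curvature-based estimate: for the regular $n$-gon $g_n$ of length $1$ one computes explicitly that $\minRad(g_n)$ equals the turning radius associated with the exterior angle $2\pi/n$ at each vertex, and that $\tfrac12\dcsd(g_n)$ is at least as large (the regular $n$-gon is ``curvature-critical'', i.e.\ its thickness is governed by $\minRad$, not by $\dcsd$, for all $n\geq 3$; this is a short explicit trigonometric check). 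Hence $\Delta_n[g_n]=\minRad(g_n)$ and it suffices to prove that for every $p\in\mathcal{P}_n$ one has $\Delta_n[p]\leq\minRad(g_n)$, with equality only for $g_n$.

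The heart of the argument is then the inequality $\minRad(p)\leq\minRad(g_n)$ for all $p\in\mathcal{P}_n$, i.e.\ $\maxCurv(p)\geq\maxCurv(g_n)$, together with the identification of the equality case. I would prove this by a global ``total turning'' argument. Writing $\phi_i=\measuredangle(x_i-x_{i-1},x_{i+1}-x_i)$ for the exterior angles of $p$, the closedness of the polygon forces $\sum_{i=1}^n\phi_i\geq 2\pi$ (the discrete Fenchel inequality for closed polygons, valid in $\R^3$), so some vertex has $\phi_i\geq 2\pi/n$. Because $p$ is equilateral with edge length $\tfrac1n$, the local curvature at that vertex is $\kappa_d(x_{i-1},x_i,x_{i+1})=2n\tan(\phi_i/2)$, and $t\mapsto 2n\tan(t/2)$ is strictly increasing on $[0,\pi)$; therefore $\maxCurv(p)\geq 2n\tan(\pi/n)=\maxCurv(g_n)$, which gives $\Delta_n[p]\leq\minRad(p)\leq\minRad(g_n)=\Delta_n[g_n]$. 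For the uniqueness of the minimiser: equality in $\maxCurv(p)=\maxCurv(g_n)$ forces every $\phi_i$ to be exactly $2\pi/n$ (if some $\phi_i>2\pi/n$ it already makes $\maxCurv(p)$ strictly larger; and Fenchel's $\sum\phi_i\geq2\pi$ then forces all remaining angles to be $2\pi/n$ as well, since each is nonnegative and the total is $2\pi$). An equilateral polygon with all exterior angles equal to $2\pi/n$ and all edge lengths equal must be planar and, by the discrete Frenet recursion (each edge direction is obtained from the previous by a rotation through the fixed angle $2\pi/n$ about a fixed axis — the axis being fixed precisely because consecutive binormals must coincide to keep every angle extremal for the thickness), it is a congruent copy of the regular $n$-gon.

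The main obstacle I anticipate is the uniqueness/rigidity step rather than the inequality: in $\R^3$, ``all exterior angles $=2\pi/n$, all edges equal'' does not by itself pin down the polygon up to congruence (one can in principle build non-planar equilateral ``helical-type'' closed polygons with constant turning angle), so I must actually use that the configuration also realises the \emph{thickness} $\Delta_n[g_n]$, i.e.\ that it is curvature-critical and that the doubly-critical self-distance constraint $\tfrac12\dcsd(p)\geq\minRad(g_n)$ is met. I would close this gap by showing that a constant-turning-angle equilateral closed space polygon which is not planar has $\dcsd$ strictly smaller than $2\minRad(g_n)$ — geometrically, a genuine (non-degenerate) torsion makes some non-adjacent pair of vertices approach each other below the planar threshold — so that such a $p$ would have $\Delta_n[p]=\tfrac12\dcsd(p)<\minRad(g_n)$, contradicting $\Delta_n[p]=\minRad(g_n)$. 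Once only planar constant-turning-angle equilateral closed polygons survive, a short induction on the vertices (the first two vertices determine the rest) shows that $p=g_n$ up to a rigid motion, completing the proof. The remaining computations — the explicit value $\maxCurv(g_n)=2n\tan(\pi/n)$, the verification $\tfrac12\dcsd(g_n)\geq\minRad(g_n)$, and the planar rigidity — are elementary and I would only sketch them.
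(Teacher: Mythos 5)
Your main inequality is exactly the right argument and is the one used in \cite{Scholtes2014d}: the polygonal Fenchel inequality $\sum_i\phi_i\geq 2\pi$ forces some exterior angle to be at least $2\pi/n$, strict monotonicity of $t\mapsto 2n\tan(t/2)$ then gives $\maxCurv(p)\geq\maxCurv(g_n)$, and together with the (genuinely needed, but elementary) verification that $\tfrac12\dcsd(g_n)\geq\minRad(g_n)$ this yields $\Delta_n[p]\leq\minRad(p)\leq\minRad(g_n)=\Delta_n[g_n]$. So the existence/optimality half is fine.

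The gap is in your uniqueness step. You correctly observe that ``all edges equal, all exterior angles equal to $2\pi/n$'' does not obviously force planarity, but the fix you propose --- that a non-planar closed equilateral polygon with constant turning angle would have $\dcsd$ strictly below $2\minRad(g_n)$ --- is an unproved geometric assertion, and it is not clear how you would establish it in the generality needed (you would have to control the distance between \emph{some} doubly critical pair of a completely arbitrary such polygon). This extra machinery is also unnecessary: the polygonal Fenchel theorem comes with an equality characterization (Milnor/Borsuk: the total curvature of a closed polygon in $\R^{3}$ equals $2\pi$ if and only if the polygon is planar and convex). A minimizer $p$ satisfies $\minRad(p)\geq\Delta_n[p]=\minRad(g_n)$, hence every $\phi_i\leq 2\pi/n$; combined with $\sum_i\phi_i\geq 2\pi$ this forces $\phi_i=2\pi/n$ for all $i$ \emph{and} equality in Fenchel, so $p$ is planar and convex, and a planar convex equilateral polygon with all exterior angles equal is the regular $n$-gon. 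In particular the non-planar ``helical'' configurations you worry about simply do not exist as closed polygons, but you need the equality case of Fenchel (not a $\dcsd$ estimate) to see this. With that substitution your proof is complete and coincides with the intended one.
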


\begin{appendix}

\section{Postlude in $\Gamma$-convergence}

In this section, we repeat some relevant facts on $\Gamma$-convergence. For more details, we refer to the books by Dal Maso and Braides (see \cite{Dal-Maso1993a,Braides2002a}).
This notion of convergence for functionals was introduced by DeGiorgi and is devised in a way, 
as to allow the convergence of minimizers and even almost minimizers.

\begin{definition}[$\,\Gamma$-convergence]
	Let $X$ be a topological space, $\mathcal{F},\mathcal{F}_{n}:X\to\overline \R\vcentcolon=\R\cup\{\pm\infty\}$. 
	Then \emph{$\mathcal{F}_{n}$ $\Gamma$-converges to $\mathcal{F}$}, if
	\begin{itemize}
		\item
			for every $x_{n}\to x$ holds $\mathcal{F}(x)\leq \liminf_{n\to\infty}\mathcal{F}_{n}(x_{n}$),
		\item
			for every $x\in X$ there are $x_{n}\to x$ with $\limsup_{n\to\infty}\mathcal{F}_{n}(x_{n})\leq \mathcal{F}(x)$.
	\end{itemize}
\end{definition}

The first inequality is usually called $\liminf$ inequality and the second one $\limsup$ inequality. If the functionals are only defined on subspaces $Y$ and $Y_{n}$
of $X$ and we extend the functionals by plus infinity on the rest of $X$, it is enough to show that the $\liminf$ inequality holds for every $x_{n}\in Y_{n}$, $x\in X$ 
and the $\limsup$ inequality for $x\in Y$ and $x_{n}\in Y_{n}$ to establish $\Gamma$-convergence. 
We want to use $\Gamma$-convergence to ensure that minimizers of the discrete functional $\mathcal{F}_{n}$ converge to minimizers of the ``smooth'' functional $\mathcal{F}$.

\begin{theorem}[Convergence of minimizers, {\cite[Corollary 7.17, p.78]{Dal-Maso1993a}}]\label{dalmasoproposition}
	Let $\mathcal{F}_{n},\mathcal{F}:X\to\overline \R$ with $\mathcal{F}_{n}\stackrel{\Gamma}{\to}\mathcal{F}$. Let $\epsilon_{n}>0$, $\epsilon_{n}\to 0$ and $x_{n}\in X$ 
	with $|\inf \mathcal{F}_{n}-\mathcal{F}_{n}(x_{n})|\leq \epsilon_{n}$. If $x_{n_{k}}\to x$, then
	\begin{align*}
		\mathcal{F}(x)=\inf \mathcal{F}=\lim_{k\to\infty}\mathcal{F}_{n}(x_{n_{k}}).
	\end{align*}
\end{theorem}

Note, that the previous theorem does not imply convergence of a subsequence, but instead assumes that we already have such a sequence to begin with. 
This fact is often taken care of by an accompanying compactness result.
\bigskip

We prove that the integral Menger curvature converges to the inverse thickness, to acquaint the reader with a particularly simple case of such a convergence theorem:

\begin{lemma}[Convergence of integral Menger curvature to inverse thickness]
	The integral Menger curvature converges to inverse thickness
	\begin{align*}
		\mathcal{M}_{s}^{\frac{1}{s}}\xrightarrow{\Gamma}\Delta^{-1}\quad\text{on }(\mathcal{C}(\mathcal{K}),\norm{\cdot}_{L^{\infty}}).
	\end{align*}
	Let $\mathcal{K}$ be a tame knot class and let $s\in (3,\infty)$.
	Then, there are minimizers $\gamma_{s}$ of $\mathcal{M}_{s}$ in $\mathcal{C}(\mathcal{K})$. Moreover, we have that  
	\begin{align*}
		\inf_{\mathcal{C}(\mathcal{K})}\mathcal{M}_{s}^{1/s}\xrightarrow[s\to\infty]{}\inf_{\mathcal{C}(\mathcal{K})}\Delta^{-1}
	\end{align*}
	and, after translating the minimizers if necessary, there is subsequence such that
	\begin{align*}
		\gamma_{s_{k}}\xrightarrow[k\to\infty]{C^{1}}\gamma\in \mathcal{C}(\mathcal{K}),
	\end{align*}
	where $\gamma$ is an ideal knot, i.e. 
	\begin{align*}
		\Delta[\gamma]^{-1}=\inf_{\mathcal{C}(\mathcal{K})}\Delta^{-1}.
	\end{align*}
\end{lemma}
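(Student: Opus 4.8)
The plan is to recognize $\mathcal{M}_s^{1/s}$ as an $L^s$-norm and to exploit the elementary mechanism $\norm{\cdot}_{L^s}\nearrow\norm{\cdot}_{L^\infty}$. Equip $\sphere_1^3$ with the probability measure $\dd t\,\dd u\,\dd v$ (total mass $1$, since $\sphere_1$ has length $1$). For an embedded $\gamma\in\mathcal{C}(\mathcal{K})$ the quantity $\mathcal{M}_s^{1/s}(\gamma)$ is precisely the $L^s$-norm of $k_\gamma\colon(t,u,v)\mapsto\kappa(\gamma(t),\gamma(u),\gamma(v))$, so $s\mapsto\mathcal{M}_s^{1/s}(\gamma)$ is non-decreasing and tends, as $s\to\infty$, to the essential supremum of $k_\gamma$. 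Since $\gamma$ is a knot, distinct parameters give distinct points, $k_\gamma$ is continuous off the null set where two parameters coincide, and therefore its essential supremum equals $\sup_{t\neq u\neq v\neq t}\kappa(\gamma(t),\gamma(u),\gamma(v))=1/\inf_{t\neq u\neq v\neq t}r(\gamma(t),\gamma(u),\gamma(v))=\Delta^{-1}[\gamma]$. In particular $\mathcal{M}_s^{1/s}(\gamma)\le\Delta^{-1}[\gamma]$ for every $s$ and every $\gamma\in\mathcal{C}(\mathcal{K})$; this pointwise bound is the workhorse for all upper bounds below.

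For the $\Gamma$-convergence on $(\mathcal{C}(\mathcal{K}),\norm{\cdot}_{L^\infty})$, the $\limsup$-inequality is witnessed by the constant recovery sequence $\gamma_s\equiv\gamma$, for which $\limsup_{s\to\infty}\mathcal{M}_s^{1/s}(\gamma)\le\Delta^{-1}[\gamma]$ by the bound just noted. For the $\liminf$-inequality, let $\gamma_s\to\gamma$ in $L^\infty$ and fix $M<\Delta^{-1}[\gamma]$; then $\inf r<1/M$, so there are distinct parameters $t_0,u_0,v_0$ with $\kappa(\gamma(t_0),\gamma(u_0),\gamma(v_0))>M$, and $\gamma(t_0),\gamma(u_0),\gamma(v_0)$ are three distinct, non-collinear points. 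By continuity of the circumradius and of $\gamma$, and using $\norm{\gamma_s-\gamma}_{L^\infty}\to0$ to transfer the estimate from $\gamma$ to $\gamma_s$, there are a neighbourhood $U$ of $(t_0,u_0,v_0)$ in $\sphere_1^3$ and an index $s_0$ with $\kappa(\gamma_s(t),\gamma_s(u),\gamma_s(v))>M$ for all $(t,u,v)\in U$ and all $s\ge s_0$. Hence $\mathcal{M}_s(\gamma_s)\ge M^s\abs{U}$, so $\mathcal{M}_s^{1/s}(\gamma_s)\ge M\abs{U}^{1/s}\to M$ and $\liminf_{s\to\infty}\mathcal{M}_s^{1/s}(\gamma_s)\ge M$; letting $M\uparrow\Delta^{-1}[\gamma]$ (and running the argument for every $M$ when $\Delta^{-1}[\gamma]=\infty$) yields $\Delta^{-1}[\gamma]\le\liminf_{s\to\infty}\mathcal{M}_s^{1/s}(\gamma_s)$.

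Now to the quantitative part. Existence of a minimizer $\gamma_s$ of $\mathcal{M}_s$ in $\mathcal{C}(\mathcal{K})$ for $s\in(3,\infty)$ is the direct method combined with the regularizing property of $\mathcal{M}_s$: a smooth representative of $\mathcal{K}$ has finite $\mathcal{M}_s$, a bound on $\mathcal{M}_s$ forces a uniform $C^{1,1-3/s}$-bound on a minimizing sequence (see \cite{Strzelecki2010a,Blatt2013a}), Arzel\`a--Ascoli (after a translation into a fixed ball) extracts a $C^1$-limit which is again an embedded curve of knot type $\mathcal{K}$, and $\mathcal{M}_s$ is lower semicontinuous. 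Write $m_s:=\inf_{\mathcal{C}(\mathcal{K})}\mathcal{M}_s^{1/s}=\mathcal{M}_s^{1/s}(\gamma_s)$ and $m_\infty:=\inf_{\mathcal{C}(\mathcal{K})}\Delta^{-1}$, which is finite and attained at an ideal knot (see \cite{Cantarella2002a,Gonzalez2002b,Gonzalez2003a}). Testing the pointwise bound $\mathcal{M}_s^{1/s}(\gamma)\le\Delta^{-1}[\gamma]$ against curves of positive thickness gives $m_s\le m_\infty$ for every $s$, hence $\limsup_{s\to\infty}m_s\le m_\infty$. Conversely, monotonicity of $L^p$-norms on a probability space yields $\mathcal{M}_4^{1/4}(\gamma_s)\le\mathcal{M}_s^{1/s}(\gamma_s)=m_s\le m_\infty$ for all $s\ge4$, so the minimizers $\gamma_s$ carry a uniform $\mathcal{M}_4$-bound; the bounded-energy compactness for $\mathcal{M}_4$ (see \cite{Strzelecki2010a}) then provides, after translation, a subsequence $\gamma_{s_k}\xrightarrow[k\to\infty]{C^{1}}\gamma\in\mathcal{C}(\mathcal{K})$. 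Applying the $\liminf$-inequality along this subsequence gives $m_\infty\le\Delta^{-1}[\gamma]\le\liminf_k m_{s_k}$; since every subsequence of $(m_s)$ admits a further subsequence with this property, $\liminf_{s\to\infty}m_s\ge m_\infty$, whence $m_s\to m_\infty$. Feeding $m_{s_k}\to m_\infty$ back into the displayed chain forces $\Delta^{-1}[\gamma]=m_\infty=\inf_{\mathcal{C}(\mathcal{K})}\Delta^{-1}$, i.e.\ $\gamma$ is ideal, which is the remaining assertion.

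The two $\Gamma$-inequalities and the bookkeeping with the infima are soft once the $L^s\nearrow L^\infty$ picture is in place and use nothing about the regularity theory for $\mathcal{M}_s$. The single real obstacle is the compactness step for the minimizers $\gamma_s$: one must know that a bound on $\mathcal{M}_s$ (here, equivalently, on $\mathcal{M}_4$) controls the $C^{1,\alpha}$-norm of the arc-length parametrization in a way that is uniform in $s$, and that $C^1$-convergence preserves the tame knot type, so that Arzel\`a--Ascoli applies and the limit stays in $\mathcal{C}(\mathcal{K})$. This is precisely the geometric Morrey--Sobolev content of \cite{Strzelecki2010a} (together with \cite{Blatt2013a}), which we take as given.
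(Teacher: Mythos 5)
Your proposal is correct and follows essentially the same route as the paper: identify $\mathcal{M}_s^{1/s}(\gamma)$ with the $L^s$-norm of $\kappa$ on the probability space $\sphere_1^3$ so that it increases monotonically to $\Delta^{-1}[\gamma]$, use that monotonicity to get a uniform $\mathcal{M}_4$-bound on the minimizers, invoke the compactness/regularity theory of \cite{Strzelecki2010a} for the $C^1$-subconvergence, and conclude ideality of the limit from the $\Gamma$-liminf inequality. The only cosmetic difference is that where the paper gets $\Gamma$-convergence for free from Fatou plus the fact that a monotone sequence of lower semicontinuous functionals $\Gamma$-converges to its pointwise supremum, and then cites Theorem \ref{dalmasoproposition}, you prove the $\liminf$-inequality by a direct localization argument and inline the convergence-of-minimizers bookkeeping by hand; both versions are sound.
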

\begin{proof}
	Using the H\"older inequality, it is easy to see the monotonically increasing convergence
	\begin{align*}
		\mathcal{M}_{s}^{\frac{1}{s}}(\gamma)=\norm*{\kappa(\gamma,\gamma,\gamma)}_{L^{s}([0,1]^{3})}
		\xrightarrow[s\to\infty]{}\norm*{\kappa(\gamma,\gamma,\gamma)}_{L^{\infty}([0,1]^{3})}=\Delta[\gamma]^{-1}.
	\end{align*}
	An application of Fatou's Theorem shows that the functionals are lower semi-continuous with regard to uniform convergence. 
	Hence, we immediately have $\Gamma$ convergence (see \cite[Remark 1.40 (ii), p.35]{Braides2002a}).
	For $s>3$, curves of finite energy are of class $C^{1}$ (see \cite{Strzelecki2010a}), so that we can restrict to this topology.
	Minimizers of the energies are known to exist and the monotonicity from above gives a uniform bound on, say, $\mathcal{M}_{4}(\gamma_{s})$.  
	These facts together with the $L^{\infty}$ bound imply $C^{1}$ subconvergence to a simple arc length curve of the same length which belongs to the same knot class (see \cite{Strzelecki2010a}).
	Now, the result is a consequence of Theorem \ref{dalmasoproposition}.
\end{proof}

\end{appendix}

\newcommand{\etalchar}[1]{$^{#1}$}
\providecommand{\bysame}{\leavevmode\hbox to3em{\hrulefill}\thinspace}

\end{document}